\documentclass[a4paper,reqno,12pt]{amsart}


\usepackage[all, 2cell, knot,]{xy} \UseAllTwocells \SilentMatrices
\usepackage{xspace}
\usepackage{amsmath}
\usepackage{amstext}
\usepackage{amsfonts}
\usepackage[mathscr]{euscript}
\usepackage{amscd}
\usepackage{latexsym}
\usepackage{amssymb}
\usepackage{layout}
\usepackage{amsthm}
\usepackage{amsfonts}
\usepackage{amsxtra}
\usepackage{color}
\usepackage[usenames,dvipsnames,svgnames,table]{xcolor}
\usepackage{graphics}
\usepackage{bm}
\usepackage{tabulary}
\usepackage{fancyhdr}
\usepackage[bookmarks=true,hyperfootnotes=false]{hyperref}
\hypersetup{
			colorlinks=true,
			linkcolor=blue,
			anchorcolor=black,
			citecolor=green,
			urlcolor=black,
}
%

%
%

\theoremstyle{plain}
\newtheorem{theorem}{Theorem}[section]
\newtheorem{lemma}[theorem]{Lemma}
\newtheorem{proposition}[theorem]{Proposition}
\newtheorem{corollary}[theorem]{Corollary}
\theoremstyle{definition}
\newtheorem{definition}[theorem]{Definition}

\newtheorem{notation}[theorem]{Notation}
\newtheorem{remark}[theorem]{Remark}





\newcommand{\clC}{\mathcal{C}}
\newcommand{\clD}{\mathcal{D}}
\newcommand{\clE}{\mathcal{E}}

\newcommand{\clI}{\mathcal{I}}


\newcommand{\zg}{\gamma}

\newcommand{\zd}{\delta}
\newcommand{\zD}{\Delta}

\newcommand{\pt}{\partial}


\newcommand{\Id}{\operatorname{Id}}

\newcommand{\pr}{\operatorname{pr}}
\newcommand{\nequ}{\mbox{$n$-equivalence}}
\newcommand{\equ}[1]{\mbox{$(#1)$-equivalence}}

\newcommand{\nid}{\noindent}

\newcommand{\bk}{\bigskip}
\newcommand{\mk}{\medskip}

\newcommand{\ovl}[1]{\overline{#1}}
\newcommand{\up}[1]{^{(#1)}}
\newcommand{\lo}[1]{_{(#1)}}
\newcommand{\rw}{\rightarrow}
\newcommand{\Rw}{\Rightarrow}

\newcommand{\xrw}{\xrightarrow} 
\newcommand{\tiund}[1]{{\times}_{#1}\:}
\newcommand{\pro}[3]{#1\tiund{#2}\overset{#3}{\cdots}\tiund{#2}#1}
\newcommand{\tens}[2]{#1\,\tiund{#2}\,#1}
\newcommand{\uset}[2]{\underset{#1}{#2}}
\newcommand{\oset}[2]{\overset{#1}{#2}}

\newcommand{\mi}{\text{-}}
\newcommand{\nm}{(n-1)}
\newcommand{\bl}{\bullet}
\newcommand{\cop}{\textstyle{\,\coprod\,}}

\newcommand{\seqc}[3]{{#1}_{#2},...,{#1}_{#3}}

\newcommand{\dop}[1]{\Delta^{{#1}^{op}}}
\newcommand{\Dop}{\Delta^{op}}
\newcommand{\Dnop}{\Delta^{{n}^{op}}}
\newcommand{\Dmenop}{\Delta^{{n-1}^{op}}}
\newcommand{\cat}[1]{\mbox{$\mathsf{Cat^{#1}}$}}
\newcommand{\Cat}{\mbox{$\mathsf{Cat}\,$}}
\newcommand{\Catn}{\mbox{$\mathsf{Cat^n}$}}
\newcommand{\Gpd}{\mbox{$\mathsf{Gpd}$}}

\newcommand{\cathd}[1]{\mbox{$\mathsf{Cat_{hd}^{#1}}$}}

\newcommand{\catwg}[1]{\mbox{$\mathsf{Cat_{wg}^{#1}}$}}

\newcommand{\segpsc}[2]{\mbox{$\mathsf{SegPs}$}\funcat{#1}{#2}}
\newcommand{\Ps}{\mbox{$\mathsf{Ps}$}}
\newcommand{\psc}[2]{\mbox{$\mathsf{Ps}$}\funcat{#1}{#2}}
\newcommand{\muk}{\mu_k}
\newcommand{\hmu}[1]{\hat\mu_{#1}}
\newcommand{\hmuk}{\hat{\mu}_k}

\newcommand{\Nn}{N_{(n)}}
\newcommand{\N}[1]{N_{(#1)}}
\newcommand{\Nu}[1]{N^{(#1)}}
\newcommand{\Nb}[1]{N_{(#1)}}
\newcommand{\zgu}[1]{\zg^{(#1)}}
\newcommand{\zgb}[1]{\zg_{(#1)}}
\newcommand{\di}[1]{d^{(#1)}}

\newcommand{\p}[1]{p^{(#1)}}

\newcommand{\pn}{p^{(n)}}
\newcommand{\op}[1]{\bar{p}^{(#1)}}

\newcommand{\Set}{\mbox{$\mathsf{Set}$}}
\newcommand{\St}{St\,}

\newcommand{\uk}{\underline{k}}
\newcommand{\ur}{\underline{r}}

\newcommand{\funcat}[2]{[\Delta^{{#1}^{op}},#2]}

\newcommand{\nfol}{$n$-fold }


\setcounter{tocdepth}{2}


\begin{document}

\title [\tiny{Pseudo-functors modelling higher structures}]{Pseudo-functors modelling higher structures\\ }

\author[\tiny{Simona Paoli}]{Simona Paoli}
 \address{Department of Mathematics, University of Leicester,
LE17RH, UK}
 \email{sp424@le.ac.uk}

\date{22 May 2016}

\keywords{$n$-fold category, pseudo-functors, weak $n$-category, multi-simplicial sets}

\subjclass[2010]{18Dxx}


\begin{abstract}
We introduce a new higher categorical structure called a weakly globular \nfol category. This structure is based on iterated internal categories and on the notion of weak globularity. We identify a suitable class of pseudo-functors whose strictification produces weakly globular \nfol categories.
\end{abstract}

\maketitle


\section{Introduction}\label{sec-intro}
In this paper we introduce a new higher categorical structure called a weakly globular \nfol category. We show that this arises from pseudo-functors, a widely used notion in category theory and homotopy  theory.

Weakly globular \nfol categories are a model of higher categories based on multi-simplicial sets. Simplicial and multi-simplicial models of higher categories have been developed over the years in several different contexts.

In the ground-breaking work of Lurie \cite{L2} and Joyal \cite{Jo} on quasi-categories, simplicial sets with additional properties lead to $(\infty,1)$-categories; that is, higher categories in which the cells in dimension higher than 1 are invertible. Several Quillen equivalent model structures describe $(\infty,1)$-categories, including simplicial categories \cite{Be2}, Segal categories \cite{Be2}, complete Segal spaces \cite{Re2}, relative categories \cite{Bk1}. More recently, the notion of $(\infty,n)$-categories emerged \cite{Re1} \cite{BeRe}, modelling higher categories where the cells in dimension higher than $n$ are invertible.
Several Quillen model structures on $(\infty,n)$-categories  have been developed, see \cite{Be3}. Simplicial methods also feature prominently in describing weak $\omega$-categories via  complicial sets \cite{Ve}.

In this work we study higher categories in the truncated $n$-case. These structures generalize  categories because in addition to objects and arrows they admit higher arrows (also called higher cells) and composition between them. When compositions are  associative and unital, we obtain  strict $n$-categories. The latter are insufficient for  many applications of higher category theory. For instance, strict $n$-groupoids do not model $n$-types in dimension $n>2$ (see \cite{S2} for a counterexample in dimension $n=3$).

The wider class of weak $n$-categories is needed: in a weak $n$-category, compositions are associative and unital only up to an invertible cell in the next dimension and these associativity and unit isomorphisms are suitably compatible or coherent.

In dimension $n=2$ and $n=3$ the idea of weak $n$-category is embodied in the classical notions of bicategory \cite{Ben} and tricategory \cite{GPS}. In these structures, explicit diagrams encode the coherence axioms for the associativity and unit isomorphisms. Capturing the coherence axioms explicitly in dimension $n>3$ seems untractable. Instead, different combinatorial machineries to define weak $n$-category \cite{L1} emerged: in these models the coherence data for the higher associativities are not given explicitly but they are automatically encoded in the combinatorics defining the models.

Different types of combinatorics have been used, including multi-simplicial sets as in Tamsamani and Simpson \cite{S2}, \cite{Ta}, (higher) operads as in Batanin  \cite{B}, Leinster \cite{L1} and Trimble  \cite{Cheng2}, opetopes as in \cite{BD2}, \cite{Cheng1} and several others.

In these classical models of higher categories the cells in dimension 0 up to $n$ form a discrete structure, that is a set. We call this the \emph{globularity condition} since it determines to the globular shape of the higher cells in the structure.

Weakly globular \nfol categories are based on a new paradigm to weaken higher categorical structure: the idea of \emph{weak globularity}. In our approach the cells in dimension 0 up to $n$ no longer form a set but have a higher categorical structure suitably equivalent to a discrete one. More precisely, they form a 'homotopically discrete \nfol category' as defined by the author in \cite{Pa}. We call this the \emph{weak globularity condition}.

In subsequent papers \cite{Pa3} \cite{Pa4} we show that weakly globular \nfol categories are suitably equivalent to the Tamsamani-Simpson model of weak $n$-categories \cite{Simp} \cite{Ta}.

 Weakly globular \nfol categories form a full subcategory of \nfol categories. These are, inductively, internal categories in $(n-1)$-fold categories. An \nfol category is therefore a 'rigid' structure in which all compositions are associative and unital.

 The weakness in a weakly globular \nfol category is encoded by the weak globularity condition. The discretizations of the homotopically discrete structures in a weakly globular $n$-fold category play the role of sets of cells in the respective dimensions. Additional conditions are imposed in the definition of weakly globular \nfol category to obtain well behaved compositions of higher cells.

In the case $n=2$, weakly globular double categories were introduced in joint work by the author in \cite{BP} and shown to be biequivalent to bicategories. The generalization to the case $n>2$ is much more complex.

In previous work the author developed the notion of weakly globular \nfol structure in all dimension $n$ in a homotopical context: for the modeling of connected $n$-types with weakly globular cat$^{n}$-groups \cite{Pa}, and for the modeling of general $n$-types with weakly globular $n$-fold groupoids \cite{BP}.

 This paper stretches far beyond a categorical generalization of the higher groupoidal case. In this paper we connect our new structure to pseudo-functors from a small category into the $2$-category $\Cat$. In subsequent work \cite{Pa3}, \cite{Pa4} this will lead to the proof of a suitable equivalence between weakly globular $n$-fold categories and the Tamsamani-Simpson model.

 Pseudo-functors feature prominently in homotopy theory, for instance in iterated loop space theory \cite{Thomas}. They are also ubiquitous in category theory \cite{Borc}, and can be described with the language of $2$-monad and their pseudo-algebras \cite{PW}.

  Weakly globular $n$-fold categories are a full subcategory of $(n-1)$-fold simplicial objects in $\Cat$, that is functors $\funcat{n-1}{\Cat}$. We consider the pseudo-version of these, that is pseudo-functors $\psc{n-1}{\Cat}$.

 Crucial to this work is the use of the strictification of pseudo-functors into strict functors. This topic had many contributions in category theory, including \cite{Str}. We use in this work the elegant formulation of Power \cite{PW}, further refined by Lack in \cite{Lack}. The latter were recently generalized in \cite{Hai}, but for this work \cite{PW} and \cite{Lack} are sufficient.

  The classical theory of strictification of pseudo-algebras \cite{PW}, \cite{Lack} affords the strictification functor

 \begin{equation}\label{eq1-intro}
    \St:\psc{n-1}{\Cat} \rw \funcat{n-1}{\Cat}
\end{equation}
left adjoint to the inclusion.

The coherence axioms in a pseudo-functor are reminiscent of the coherence data for the compositions of higher cells in a weak higher category. So it is natural to ask if a subcategory of pseudo-functors can model, in a suitable sense, higher structures. In this paper we positively answer this question by introducing a subcategory

\begin{equation*}
  \segpsc{n-1}{\Cat}\subset \psc{n-1}{\Cat}
\end{equation*}
of \emph{Segalic pseudo-functors}. Our main result, Theorem \ref{the-strict-funct} is that the classical strictification functor \eqref{eq1-intro} restricts to a functor
\begin{equation*}
    L_n:\segpsc{n-1}{\Cat} \rw\catwg{n}\;.
\end{equation*}
In a subsequent paper \cite{Pa3} we associate to a Tamsamani weak $n$-category a Segalic pseudo-functor and thus build a 'rigidification' functor from Tamsamani weak $n$-categories to weakly globular \nfol categories.

This paper is organized as follows. Section \ref{sec-prelim} contains some preliminaries on (multi) simplicial techniques as well as on pseudo-functors and their strictification.

Section \ref{sec-wg-nfold-categ} introduces weakly globular \nfol categories and $n$-equi-valences between them, and discusses the main properties of this structure. In Proposition \ref{pro-crit-ncat-be-wg} we prove a criterion for a \nfol category to be weakly globular. This is crucial to prove the main result in the next section.

Section \ref{sec-pseud-strict} introduces Segalic pseudo-functors. We show in Proposition \ref{pro-transf-wg-struc} that an \nfol category levelwise equivalent to a Segalic pseudo-functor is weakly globular. This leads to the main result Theorem \ref{the-strict-funct}.

\bk

\textbf{Acknowledgements}: This work has been supported by a Marie Curie International Reintegration Grant No 256341. I thank the Centre for Australian Category Theory for their hospitality and financial support during August-December 2015, as well as the University of Leicester for its financial support during my study leave. I also thank the University of Chicago for their hospitality and financial support during April 2016.


\section{Preliminaries}\label{sec-prelim}
In this section we review some basic simplicial techniques that we will use throughout the paper as well as some categorical background on pseudo-functors and their strictification. The material in this section is well-known, see for instance \cite{Borc}, \cite{Jard}, \cite{PW}, \cite{Lack}.
\subsection{Simplicial objects}\label{sbs-simp-tech}
Let $\zD$ be the simplicial category and let $\Dnop$ denote the product of $n$ copies of $\Dop$. Given a category $\clC$, $\funcat{n}{\clC}$ is called the category of $n$-simplicial objects in $\clC$ (simplicial objects in $\clC$ when $n=1$).
\begin{notation}\label{not-simp}
    If $X\in \funcat{n}{\clC}$ and $\uk=([k_1],\ldots,[k_n])\in \Dnop$, we shall denote $X ([k_1],\ldots,[k_n])$ by $X(k_1,\ldots,k_n)$, as well as $X_{k_1,\ldots,k_n}$ and $X_{\uk}$. We shall also denote $\uk(1,i)=([k_1],\ldots,[k_{i-1}],1,[k_{i+1}],\ldots,[k_n]) \in \Dnop$ for $1\leq i\leq n$.

    Every $n$-simplicial object in $\clC$ can be regarded as a simplicial object in $\funcat{n-1}{\clC}$ in $n$ possible ways. For each $1\leq i\leq n$ there is an isomorphism
    \begin{equation*}
        \xi_i:\funcat{n}{\clC}\rw\funcat{}{\funcat{n-1}{\clC}}
    \end{equation*}
    given by
    \begin{equation*}
        (\xi_i X)_{r}(k_1,\ldots,k_{n-1})=X(k_1,\ldots,k_{i-1},r,k_{i+1},\ldots,k_{n-1})
    \end{equation*}
    for $X\in\funcat{n}\clC$ and $r\in\Dop$.
\end{notation}
\begin{definition}\label{def-fun-smacat}
    Let $F:\clC \rw \clD$ be a functor, $\clI$ a small category. Denote
    \begin{equation*}
        \ovl{F}:[\clI,\clC]\rw [\clI,\clD]
    \end{equation*}
    the functor given by
    \begin{equation*}
        (\ovl{F} X)_i = F(X(i))
    \end{equation*}
    for all $i\in\clI$.
\end{definition}
\begin{definition}\label{def-seg-map}
    Let ${X\in\funcat{}{\clC}}$ be a simplicial object in any category $\clC$ with pullbacks. For each ${1\leq j\leq k}$ and $k\geq 2$, let ${\nu_j:X_k\rw X_1}$ be induced by the map  $[1]\rw[k]$ in $\Delta$ sending $0$ to ${j-1}$ and $1$ to $j$. Then the following diagram commutes:
\begin{equation}\label{eq-seg-map}
\xymatrix{
&&&& X\sb{k} \ar[llld]_{\nu\sb{1}} \ar[ld]_{\nu\sb{2}} \ar[rrd]^{\nu\sb{k}} &&&& \\
& X\sb{1} \ar[ld]_{d\sb{1}} \ar[rd]^{d\sb{0}} &&
X\sb{1} \ar[ld]_{d\sb{1}} \ar[rd]^{d\sb{0}} && \dotsc &
X\sb{1} \ar[ld]_{d\sb{1}} \ar[rd]^{d\sb{0}} & \\
X\sb{0} && X\sb{0} && X\sb{0} &\dotsc X\sb{0} && X\sb{0}
}
\end{equation}

If  ${\pro{X_1}{X_0}{k}}$ denotes the limit of the lower part of the
diagram \eqref{eq-seg-map}, the \emph{$k$-th Segal map for $X$} is the unique map
$$
\muk:X\sb{k}~\rw~\pro{X\sb{1}}{X\sb{0}}{k}
$$
\noindent such that ${\pr_j\,\muk=\nu\sb{j}}$ where
${\pr\sb{j}}$ is the $j^{th}$ projection.
\end{definition}
\begin{definition}\label{def-ind-seg-map}

    Let ${X\in\funcat{}{\clC}}$ and suppose that there is a map $\zg: X_0 \rw X^d_0$ in $\clC$  $\zg: X_0 \rw X^d_0$  such that the limit of the diagram
\begin{equation*}
\xymatrix@C20pt{
& X\sb{1} \ar[ld]_{\zg d\sb{1}} \ar[rd]^{\zg d\sb{0}} &&
X\sb{1} \ar[ld]_{\zg d\sb{1}} \ar[rd]^{\zg d\sb{0}} &\cdots& k &\cdots&
X\sb{1} \ar[ld]_{\zg d\sb{1}} \ar[rd]^{\zg d\sb{0}} & \\
X^d\sb{0} && X^d\sb{0} && X^d\sb{0}\cdots &&\cdots X^d\sb{0} && X^d\sb{0}
    }
\end{equation*}
exists; denote the latter by $\pro{X_1}{X_0^d}{k}$. Then the following diagram commutes, where $\nu_j$ is as in Definition \ref{def-seg-map}, and $k\geq 2$
\begin{equation*}
\xymatrix{
&&&& X\sb{k} \ar[llld]_{\nu\sb{1}} \ar[ld]_{\nu\sb{2}} \ar[rrd]^{\nu\sb{k}} &&&& \\
& X\sb{1} \ar[ld]_{\zg d\sb{1}} \ar[rd]^{\zg d\sb{0}} &&
X\sb{1} \ar[ld]_{\zg d\sb{1}} \ar[rd]^{\zg d\sb{0}} && \dotsc &
X\sb{1} \ar[ld]_{\zg d\sb{1}} \ar[rd]^{\zg d\sb{0}} & \\
X^d\sb{0} && X^d\sb{0} && X^d\sb{0} &\dotsc X^d\sb{0} && X^d\sb{0}
}
\end{equation*}
The \emph{$k$-th induced Segal map for $X$} is the unique map
\begin{equation*}
\hmuk:X\sb{k}~\rw~\pro{X\sb{1}}{X^d\sb{0}}{k}
\end{equation*}
such that ${\pr_j\,\hmuk=\nu\sb{j}}$ where ${\pr\sb{j}}$ is the $j^{th}$ projection.
\end{definition}
\subsection{$\mathbf{n}$-Fold internal categories}\label{sbs-nint-cat}

Let  $\clC$ be a category with finite limits. An internal category $X$ in $\clC$ is a diagram in $\clC$
\begin{equation}\label{eq-nint-cat}
\xymatrix{
\tens{X_1}{X_0} \ar^(0.65){m}[r] & X_1 \ar^{d_0}[r]<2.5ex> \ar^{d_1}[r] & X_0
\ar^{s}[l]<2ex>
}
\end{equation}
where $m,d_0,d_1,s$ satisfy the usual axioms of a category (see for instance \cite{Borc}) for details. An internal functor is a morphism of diagrams like \eqref{eq-nint-cat} commuting in the obvious way. We denote by $\Cat \clC$ the category of internal categories and internal functors.

The category $\cat{n}(\clC)$ of \nfol categories in $\clC$ is defined inductively by iterating $n$ times the internal category construction. That is, $\cat{1}(\clC)=\Cat$ and, for $n>1$,
\begin{equation*}
  \cat{n}(\clC)= \Cat(\cat{n-1}(\clC)).
\end{equation*}

When $\clC=\Set$, $\cat{n}(\Set)$ is simply denoted by $\cat{n}$ and called the category of \nfol categories (double categories when $n=2$).

\subsection{Nerve functors}\label{sus-ner-funct}

There is a nerve functor
\begin{equation*}
    N:\Cat\clC \rw \funcat{}{\clC}
\end{equation*}
such that, for $X\in\Cat\clC$
\begin{equation*}
    (N X)_k=
    \left\{
      \begin{array}{ll}
        X_0, & \hbox{$k=0$;} \\
        X_1, & \hbox{$k=1$;} \\
        \pro{X_1}{X_0}{k}, & \hbox{$k>1$.}
      \end{array}
    \right.
\end{equation*}
When no ambiguity arises, we shall sometimes denote $(NX)_k$ by $X_k$ for all $k\geq 0$.

The following fact is well known:
\begin{proposition}\label{pro-ner-int-cat}
    A simplicial object in $\clC$ is the nerve of an internal category in $\clC$ if and only if all the Segal maps are isomorphisms.
\end{proposition}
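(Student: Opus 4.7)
My plan is to prove both implications of this standard characterization, with the forward direction being essentially a bookkeeping check and the backward direction requiring the extraction of an internal category structure from the simplicial data.

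For the forward direction, suppose $X = NY$ for some internal category $Y \in \Cat\clC$. Then by the very definition of the nerve functor in Section \ref{sus-ner-funct}, $X_k = \pro{Y_1}{Y_0}{k}$ for $k\geq 2$, and the face maps $\nu_j:X_k \to X_1 = Y_1$ are precisely the $j$-th projections of this iterated pullback. Hence the Segal map $\mu_k$ is, by definition, the identity on $\pro{X_1}{X_0}{k}$, and in particular an isomorphism.

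For the backward direction, suppose $X\in\funcat{}{\clC}$ has all Segal maps $\mu_k$ isomorphisms. I would build an internal category $Y\in\Cat\clC$ as in \eqref{eq-nint-cat} by setting $Y_0 = X_0$, $Y_1 = X_1$, taking source and target to be $d_1, d_0 : X_1 \to X_0$, identity to be $s_0:X_0\to X_1$, and defining composition as the composite
\begin{equation*}
m \,:\, \tens{X_1}{X_0} \xrightarrow{\mu_2^{-1}} X_2 \xrightarrow{\; d_1 \;} X_1.
\end{equation*}
The unit axioms $m\circ(s_0\times 1) = 1 = m\circ(1\times s_0)$ follow from the standard simplicial identities $d_1 s_0 = d_1 s_1 = \id$ together with the fact that $\mu_2$ sends $s_0$ and $s_1$ to $(s_0,\id)$ and $(\id,s_0)$ respectively. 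The source/target compatibility $d_1 m = d_1 \pr_1$ and $d_0 m = d_0 \pr_2$ come from the simplicial identities $d_1 d_1 = d_1 d_2$ and $d_0 d_1 = d_0 d_0$, interpreted via $\mu_2^{-1}$.

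The main technical step is associativity, which I would verify using $X_3$ and the isomorphism $\mu_3$. Writing the triple pullback $\pro{X_1}{X_0}{3}$ as $X_3$ via $\mu_3^{-1}$, the two compositions $m\circ(m\times 1)$ and $m\circ(1\times m)$ both factor through $X_3$, and each can be identified with $d_1 d_2 = d_1 d_1: X_3 \to X_1$ by using the compatibility of $\mu_2$ and $\mu_3$ with the face maps (which follows from the fact that the Segal maps are natural with respect to the simplicial operators $[1]\to[k]$ selecting consecutive edges). The equality of these two composites then reduces to a simplicial identity in $X_3$. The subtlety — and the main obstacle — is keeping track of the identifications given by $\mu_2^{-1}$ and $\mu_3^{-1}$ carefully enough to see that the pullback projections from $\pro{X_1}{X_0}{3}$ correspond exactly to the appropriate iterated face maps on $X_3$; once this correspondence is made precise, associativity is immediate from $d_1 d_2 = d_1 d_1$. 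Finally, I would check that this construction is inverse to the nerve functor by verifying that $N$ applied to the constructed $Y$ recovers $X$ in each simplicial degree, using the assumption that $\mu_k$ is an isomorphism for all $k\geq 2$.
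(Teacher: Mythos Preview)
The paper does not actually supply a proof of this proposition: it is introduced with ``The following fact is well known'' and no argument is given. Your proposal is the standard proof of this classical characterization, and the outline is correct in all essential respects --- nerve gives Segal isomorphisms by construction, and conversely one reconstructs the internal category from $X_0$, $X_1$, with composition $d_1\mu_2^{-1}$, units via the simplicial identities $d_1 s_0 = d_1 s_1 = \id$, and associativity from $d_1 d_2 = d_1 d_1$ on $X_3$ transported along $\mu_3^{-1}$.

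Two small points worth tightening. First, your phrasing ``$\mu_2$ sends $s_0$ and $s_1$ to $(s_0,\id)$ and $(\id,s_0)$'' is slightly imprecise: what one actually computes is $\mu_2 s_0 = (s_0 d_1,\id)$ and $\mu_2 s_1 = (\id, s_0 d_0)$, which is exactly what is needed for the unit axioms $m\circ(s_0\,d_1,\id)=\id$ and $m\circ(\id, s_0\,d_0)=\id$. Second, in the final step you say you would check $NY\cong X$ ``in each simplicial degree'', but one also needs the face and degeneracy maps to agree; this follows because the Segal isomorphisms are natural in the simplicial operators, a fact you invoke earlier for associativity but should also invoke here. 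Neither of these is a genuine gap --- the argument is sound.
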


By iterating the nerve construction, we obtain the multinerve functor
\begin{equation*}
    \Nn :\cat{n}(\clC)\rw \funcat{n}{\clC}\;.
\end{equation*}
\begin{definition}\label{def-discrete-nfold}
An internal $n$-fold category $X\in \cat{n}(\clC)$ is said to be discrete if $\Nn X$ is a constant functor.
\end{definition}

 Each object of $\cat{n}(\clC)$ can be considered as an internal category in $\cat{n-1}(\clC)$ in $n$ possible ways, corresponding to the $n$ simplicial directions of its multinerve. To prove this, we use the following lemma, which is a straightforward consequence of the definitions.

\newpage
\begin{lemma}\label{lem-multin-iff}\
\begin{itemize}
      \item [a)] $X\in\funcat{n}{\clC}$ is the multinerve of an \nfol category in $\clC$ if and only if, for all $1\leq r\leq n$ and $[p_1],\ldots,[p_r]\in\Dop$, $p_r\geq 2$
\begin{equation}\label{eq-multin-iff}
\begin{split}
    &  X(p_1,\ldots,p_r,\mi)\cong\\
    &\cong\pro{X(p_1,\ldots,p_{r-1},1,\mi)}{X(p_1,\ldots,p_{r-1},0,\mi)}{p_r}
\end{split}
\end{equation}
      \item [b)] Let $X\in\cat{n}(\clC)$. For each $1\leq k\leq n$, $[i]\in\Dop$, there is $X_i\up{k}\in\cat{n-1}(\clC)$ with
\begin{equation*}
    \Nb{n-1}X_i\up{k} (p_1,\ldots,p_{n-1})=\Nn X(p_1,\ldots,p_{k-1},i,p_k,\ldots,p_{n-1})
\end{equation*}
    \end{itemize}
\end{lemma}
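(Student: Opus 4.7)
The plan is to prove part (a) by induction on $n$ using Proposition \ref{pro-ner-int-cat} as both the base case and the inductive engine, and then derive part (b) by applying (a) in dimension $n-1$ to the $(n-1)$-simplicial object obtained by fixing one coordinate.

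For part (a), the base case $n=1$ is exactly Proposition \ref{pro-ner-int-cat}: condition \eqref{eq-multin-iff} for $r=1$ (with $\mi$ empty) says that every Segal map of $X$ is an isomorphism, which characterizes nerves of internal categories. For the inductive step, I would view $X \in \funcat{n}{\clC}$ as a simplicial object in $\funcat{n-1}{\clC}$ via the isomorphism $\xi_1$ of Notation \ref{not-simp}. By Proposition \ref{pro-ner-int-cat} applied in $\funcat{n-1}{\clC}$, the object $\xi_1 X$ is the nerve of an internal category in $\funcat{n-1}{\clC}$ iff its Segal maps are isomorphisms; evaluating this pointwise at $(p_2,\ldots,p_n)$ yields exactly condition \eqref{eq-multin-iff} for $r=1$. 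Then, by definition, $X$ is the multinerve of an $n$-fold category in $\clC$ iff, in addition, $X(0,\mi)$ and $X(1,\mi)$ (and hence also $X(0,\mi)\times\cdots\times X(0,\mi)$-fibred with $X(1,\mi)$ factors, which is automatic from the pullback condition) are multinerves of $(n-1)$-fold categories in $\clC$. Applying the inductive hypothesis to these two $(n-1)$-simplicial objects yields exactly conditions \eqref{eq-multin-iff} for $r \in \{2,\ldots,n\}$, after re-indexing the dummy coordinates.

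For part (b), fix $1\leq k\leq n$ and $[i]\in\Dop$, and define an $(n-1)$-simplicial object $Y$ by
\begin{equation*}
Y(p_1,\ldots,p_{n-1}) \;=\; \Nn X(p_1,\ldots,p_{k-1},i,p_k,\ldots,p_{n-1}).
\end{equation*}
By part (a) applied in dimension $n-1$, it suffices to check that $Y$ satisfies the $n-1$ iterated Segal conditions. These follow directly from the $n$ iterated Segal conditions that $\Nn X$ satisfies by part (a): for $r<k$, the $r$-th Segal condition on $Y$ is obtained from the $r$-th Segal condition on $\Nn X$ by fixing the $k$-th coordinate at $i$; for $r\geq k$, the $r$-th Segal condition on $Y$ is obtained from the $(r+1)$-th Segal condition on $\Nn X$ in the same way. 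Since pullbacks in $\clC$ are computed pointwise in $\funcat{n-1}{\clC}$, the corresponding isomorphisms persist after specialising the $k$-th coordinate. Thus $Y$ is the multinerve of an $(n-1)$-fold category $X_i^{(k)} \in \cat{n-1}(\clC)$, as required.

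The only real work is bookkeeping the coordinate shift in part (b) — tracking which Segal condition on the $n$-dimensional structure produces which Segal condition on the restriction after deleting the $k$-th slot. There is no genuine conceptual obstacle; as the paper indicates, the lemma is a direct unpacking of the inductive definition of $\cat{n}(\clC)$ combined with the Segal characterisation of nerves.
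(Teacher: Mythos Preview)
Your proposal is correct and follows essentially the same approach as the paper: induction on $n$ with Proposition~\ref{pro-ner-int-cat} as base case and engine for part (a), then part (b) by restricting the $k$-th coordinate and invoking (a) in dimension $n-1$. The only cosmetic difference is that in the converse direction of (a) the paper checks that every $X(p_1,\mi)$ satisfies the inductive hypothesis directly, whereas you check only $X(0,\mi)$ and $X(1,\mi)$ and infer the rest from the $r=1$ Segal isomorphism and closure of $\cat{n-1}(\clC)$ under pullbacks; both are fine.
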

\begin{proof}\

  a) By induction on $n$. By Proposition \ref{pro-ner-int-cat}, it is true for $n=1$. Suppose it holds for $n-1$ and let $X\in\Cat(\cat{n-1}(\clC))$ with objects of objects (resp. arrows) $X_0$ (resp. $X_1$); denote $(NX)_p=X_p$. By definition of the multinerve
      \begin{equation*}
        (\Nb{n} X)(p_1,\ldots,p_r,\mi)=\Nb{n-1}X_{p_1}(p_2,\ldots,p_r,\mi)\;.
      \end{equation*}
      Hence using the induction hypothesis
\begin{flalign*}
       &\Nb{n}X(p_1...p_r\,\mi)=\Nb{n-1}X_{p_1}(p_2... p_r\,\mi)\cong&
\end{flalign*}
\begin{equation*}
\resizebox{1.0\hsize}{!}{$
\cong \pro{\Nb{n-1} X_{p_1}(p_2... p_{r-1}\,1\,\mi)}
         {\Nb{n-1} X_{p_1}(p_2... p_{r-1}\,0\,\mi)}{p_r}=$}
\end{equation*}
\begin{equation*}
\resizebox{.97\hsize}{!}{
          $=\pro{\Nb{n} X(p_1... p_{r-1}\,1\,\mi)}
         {\Nb{n} X(p_1... p_{r-1}\,0\,\mi)}{p_r}.$} \hspace{10mm}
\end{equation*}
Conversely, suppose $X\in\funcat{n}{\clC}$ satisfies \eqref{eq-multin-iff}. Then for each $[p]\in\Dop$, $X(p,\mi)$ satisfies \eqref{eq-multin-iff}, hence
\begin{equation*}
    X(p,\mi)=\Nb{n-1}X_p
\end{equation*}
for $X_p\in\cat{n-1}(\clC)$. Also, by induction hypothesis
\begin{equation*}
    X(p,\mi)=\pro{X(1,\mi)}{X(0,\mi)}{p}\;.
\end{equation*}
Thus we have the object $X\in\cat{n}(\clC)$ with objects $X_0$, arrows $X_1$ and $X_p=X(p,\mi)$ as above.

\bigskip
b) By part a), there is an isomorphism for $p_r\geq 2$
\begin{flalign*}
       &\Nb{n}X(p_1...p_n)=&
\end{flalign*}
\begin{equation*}
\resizebox{1.0\hsize}{!}{$\pro{\Nb{n}X(p_1...p_{r-1}\, 1 ...p_n)}{\Nb{n}X(p_1...p_{r-1}\, 0 ...p_n)}{p_r}$}\;.
\end{equation*}
In particular, evaluating this at $p_k=i$, this is saying the $(n-1)$-simplicial group taking $(p_1...p_n)$ to $\Nb{n}X(p_1...p_{k-1}\, i ...p_{n-1})$ satisfies condition \eqref{eq-multin-iff} in part a). Hence by part a) there exists $X_i\up{k}$ with
\begin{equation*}
    \Nb{n-1}X_i\up{k}(p_1...p_{n-1})=\Nb{n}X(p_1...p_{k-1}\, i ...p_{n-1})
\end{equation*}
as required.
\end{proof}
\begin{proposition}\label{pro-assoc-iso}
    For each $1\leq k\leq n$ there is an isomorphism $\xi_k:\cat{n}(\clC)\rw \cat{n}(\clC)$ which associates to $X=\cat{n}(\clC)$ an object $\xi_k X$ of $\Cat(\cat{n-1}(\clC))$ with
    \begin{equation*}
        (\xi_k X)_i=X_i\up{k}\qquad i=0,1
    \end{equation*}
    with $X_i\up{k}$ as in Lemma \ref{lem-multin-iff}.
\end{proposition}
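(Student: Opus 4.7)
The plan is to realize $\xi_k$ as the restriction to multinerves of $n$-fold categories of the direction-swap isomorphism from Notation \ref{not-simp}, using the characterizations supplied by Lemma \ref{lem-multin-iff} and Proposition \ref{pro-ner-int-cat}.

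Given $X\in\cat{n}(\clC)$, I would apply the isomorphism $\xi_k:\funcat{n}{\clC}\to\funcat{}{\funcat{n-1}{\clC}}$ of Notation \ref{not-simp} to the multinerve $\Nn X$ to obtain the simplicial object $Y:=\xi_k \Nn X$ in $\funcat{n-1}{\clC}$ characterized by
\begin{equation*}
Y_r(p_1,\ldots,p_{n-1})=\Nn X(p_1,\ldots,p_{k-1},r,p_k,\ldots,p_{n-1}).
\end{equation*}
The main task is then to promote $Y$ to the nerve of an internal category in $\cat{n-1}(\clC)$, since $\Cat(\cat{n-1}(\clC))=\cat{n}(\clC)$ by definition.

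I would check the two hypotheses that allow this promotion. First, for each level $r$, the $(n-1)$-simplicial object $Y_r$ is itself the multinerve of an $(n-1)$-fold category: this is exactly the content of Lemma \ref{lem-multin-iff}(b), and it produces $Y_r=\Nb{n-1}X_r\up{k}$. Second, the Segal maps of $Y$, viewed as a simplicial object in $\cat{n-1}(\clC)$, are isomorphisms for $r\geq 2$: since the multinerve $\Nb{n-1}$ preserves and reflects the iterated fiber products involved (they are computed pointwise at the multi-simplicial level), it is enough to verify the Segal isomorphism on each evaluation $(p_1,\ldots,p_{n-1})$, which in turn reduces to the isomorphism \eqref{eq-multin-iff} of Lemma \ref{lem-multin-iff}(a) with the distinguished index taken to be $k$, evaluated at the specific remaining coordinates $(p_{k+1},\ldots,p_{n-1})$.

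By Proposition \ref{pro-ner-int-cat}, $Y$ is then the nerve of an object $\xi_k X\in\Cat(\cat{n-1}(\clC))=\cat{n}(\clC)$ with $(\xi_k X)_i=X_i\up{k}$ as required. Functoriality of $\xi_k$ is routine from the naturality of $\Nn$ and of the Notation \ref{not-simp} swap, and the inverse is given by the symmetric construction built from $\xi_k^{-1}$ at the multi-simplicial level, so $\xi_k$ is indeed an isomorphism of categories. The step I expect to require the most care is the second hypothesis above: one must carefully distinguish the multi-simplicial Segal condition supplied by Lemma \ref{lem-multin-iff}(a) from the internal Segal condition in $\cat{n-1}(\clC)$ required by Proposition \ref{pro-ner-int-cat}, and confirm via the multinerve embedding that the former implies the latter.
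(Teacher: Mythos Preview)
Your proposal is correct and follows essentially the same route as the paper: apply the direction-swap of Notation \ref{not-simp} to $\Nn X$, invoke Lemma \ref{lem-multin-iff}(b) to recognize each level as $\Nb{n-1}X_r\up{k}$, use Lemma \ref{lem-multin-iff}(a) at index $k$ for the Segal isomorphisms, and conclude via Proposition \ref{pro-ner-int-cat}. The only cosmetic difference is that the paper writes out the inverse $\xi'_k$ explicitly (starting from $X\in\Cat(\cat{n-1}(\clC))$ and reassembling an $n$-simplicial object satisfying \eqref{eq-multin-iff}), whereas you appeal directly to the multi-simplicial $\xi_k^{-1}$; these amount to the same thing.
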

\begin{proof}
Consider the object of $\funcat{}{\funcat{n-1}{\clC}}$ taking $i$ to the $\nm$-simplicial object associating to $(\seqc{p}{1}{n-1})$ the object
\begin{equation*}
    \Nb{n}X(p_1...p_{k-1}\, i \, p_{k+1}...p_{n-1})\;.
\end{equation*}
By Lemma \ref{lem-multin-iff} b), the latter is the multinerve of $X_i\up{k}\in \cat{n-1}(\clC)$. Further, by Lemma \ref{lem-multin-iff} a), we have
\begin{equation*}
    \Nb{n-1}X_i\up{k}\cong \pro{\Nb{n-1}X_1\up{k}}{\Nb{n-1}X_0\up{k}}{i}\;.
\end{equation*}
Hence $\Nb{n}X$ as a simplicial object in $\funcat{n-1}{\clC}$ along the $k^{th}$ direction, has
\begin{equation*}
    (\Nb{n}X)_i=
    \left\{
      \begin{array}{ll}
        \Nb{n-1}X_i\up{k}, & \hbox{$i=0,1$;} \\
        \Nb{n-1}(\pro{X_1\up{k}}{X_0\up{k}}{i}), & \hbox{for $i\geq 2$.}
      \end{array}
    \right.
\end{equation*}
This defines $\xi_k X\in\Cat(\cat{n-1}(\clC))$ with
\begin{equation*}
    (\xi_k X)_i=\Nb{n-1}X_i\up{k}\qquad i=0,1\;.
\end{equation*}
We now define the inverse for $\xi_k$. Let $X\in\Cat(\cat{n-1}(\clC))$, and let $X_i=\pro{X_1}{X_0}{i}$ for $i\geq 2$. The $n$-simplicial object $X_{k}$ taking $(p_1,\ldots,p_n)$ to
\begin{equation*}
    \Nb{n}X_{p_k}(p_1...p_{k-1}p_{k+1}...p_n)
\end{equation*}
satisfies condition \eqref{eq-multin-iff}, as easily seen. Hence by Lemma \ref{lem-multin-iff} there is $\xi'_k X\in\cat{n}\clC$ such that $\Nb{n }\xi'_k X=X_k$. It is immediate to check that $\xi_k$ and $\xi'_k$ are inverse bijections.
\end{proof}
\begin{definition}\label{def-ner-func-dirk}
    The nerve functor in the $k^{th}$ direction is defined as the composite
    \begin{equation*}
        \Nu{k}:\cat{n}(\clC)\xrw{\xi_k}\Cat(\cat{n-1}(\clC))\xrw{N}\funcat{}{\cat{n-1}(\clC)}
    \end{equation*}
    so that, in the above notation,
    \begin{equation*}
        (\Nu{k}X)_i=X\up{k}_i\qquad i=0,1\;.
    \end{equation*}
    Note that $\Nb{n}=\Nu{n}...\Nu{2}\Nu{1}$.
\end{definition}
\begin{notation}\label{not-ner-func-dirk}
    When $\clC=\Set$ we shall denote
    \begin{equation*}
        J_n=\Nu{n-1}\ldots \Nu{1}:\cat{n}\rw\funcat{n-1}{\Cat}\;.
    \end{equation*}
\end{notation}
Thus $J_n$ amounts to taking the nerve construction in all but the last simplicial direction. Thus $J_n$ amounts to taking the nerve construction in all but the last simplicial direction. The functor $J_n$ is fully faithful, thus we can identify $\cat{n}$ with the image $J_n(\cat{n})$ of the functor $J_n$.

 Given $X\in\cat{n}$, when no ambiguity arises we shall denote, for each $(s_1,\ldots ,s_{n-1})\in\Dmenop$
\begin{equation*}
    X_{s_1,\ldots ,s_{n-1}}=(J_n X)_{s_1,\ldots ,s_{n-1}}\in\Cat
\end{equation*}
and more generally, if $1\leq j \leq n-1$,
\begin{equation*}
    X_{s_1,\ldots ,s_{j}}=(\Nu{j}\ldots \Nu{1} X)_{s_1,\ldots ,s_{j}}\in\cat{n-j}\;.
\end{equation*}
Let $ob : \Cat \clC \rw \clC$ be the object of object functor. The left adjoint to $ob$ is the discrete internal category functor $d$. By Proposition \ref{pro-assoc-iso}  we then have
\begin{equation*}
\xymatrix{
    \cat{n}\clC \oset{\xi_n}{\cong}\Cat(\cat{n-1}\clC) \ar@<1ex>[r]^(0.65){ob} & \cat{n-1}\clC \ar@<1ex>[l]^(0.35){d}\;.
}
\end{equation*}

We denote
\begin{equation*}
\di{n}=\xi^{-1}_{n}\circ d \text{\;\;for\;\;} n>1,\; \di{1}=d \;.
\end{equation*}
Thus $\di{n}$ is the discrete inclusion of $\cat{n-1}\clC$ into $\cat{n}\clC$ in the $n^{th}$ direction.

\bk

The following is a characterization of objects of $\funcat{n-1}{\Cat}$ in the image of the functor $J_n$ in \ref{not-ner-func-dirk}.
\begin{lemma}\label{lem-char-obj}
     Let $L\in \funcat{n-1}{\Cat}$ be such that, for all $\uk\in\dop{n-1}$, $1\leq i\leq n-1$ and $k_i\geq 2$, the Segal maps are isomorphisms:
     \begin{equation}\label{eq-lem-char-obj}
        L_{\uk} \cong\pro{L_{\uk(1,i)}}{L_{\uk(0,i)}}{k_i}\;.
     \end{equation}
     Then $L\in\cat{n}$.
\end{lemma}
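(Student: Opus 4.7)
The plan is to show that $L$ lies in the image of the fully faithful embedding $J_n:\cat{n}\hookrightarrow\funcat{n-1}{\Cat}$ from Notation \ref{not-ner-func-dirk}. To do this, I apply the nerve functor $N:\Cat\rw\funcat{}{\Set}$ in one additional (the $n$-th) coordinate, producing an $n$-simplicial set $X\in\funcat{n}{\Set}$ with
\begin{equation*}
X(p_1,\ldots,p_n)=\bigl(NL(p_1,\ldots,p_{n-1})\bigr)_{p_n}\,.
\end{equation*}
The task then reduces to verifying that $X$ satisfies the multinerve criterion \eqref{eq-multin-iff} of Lemma \ref{lem-multin-iff} a), which will furnish some $L'\in\cat{n}$ with $\Nb{n}L'=X$. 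Since $J_n=\Nu{n-1}\cdots\Nu{1}$ is built from the multinerve in all coordinates except the last, unwinding the definitions will give $J_n(L')=L$, identifying $L$ with an object of $\cat{n}$.

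The condition to check, for each $1\leq r\leq n$ and $p_r\geq 2$, is the Segal-type isomorphism of $(n-r)$-simplicial sets
\begin{equation*}
X(p_1,\ldots,p_r,\mi)\cong\pro{X(p_1,\ldots,p_{r-1},1,\mi)}{X(p_1,\ldots,p_{r-1},0,\mi)}{p_r}\,.
\end{equation*}
For $r=n$, this is precisely the content of Proposition \ref{pro-ner-int-cat} applied to the internal category $L(p_1,\ldots,p_{n-1})\in\Cat$, since in that coordinate $X$ is by construction the ordinary nerve of an internal category. For $1\leq r\leq n-1$, the hypothesis \eqref{eq-lem-char-obj} already supplies the analogous isomorphism at the level of $L$, but living in $\Cat$ rather than in $\funcat{}{\Set}$. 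The key observation is that the nerve $N:\Cat\rw\funcat{}{\Set}$ preserves limits (it is represented by the finite ordinal categories $[k]$), so pullbacks of internal categories are sent to pointwise pullbacks of simplicial sets; applying $N$ to \eqref{eq-lem-char-obj} and evaluating at $p_n$ therefore yields exactly the desired relation for $X$.

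There is no serious obstacle: the argument is essentially a transport of the Segal conditions from $\funcat{n-1}{\Cat}$ to $\funcat{n}{\Set}$ via the limit-preserving nerve, followed by an invocation of Lemma \ref{lem-multin-iff} a). The point requiring a little care is purely bookkeeping, namely matching the indexing conventions of that lemma with the recursive definition of $J_n$ in Notation \ref{not-ner-func-dirk}, so as to confirm that the $n$-fold category produced by the criterion genuinely recovers $L$ under $J_n$.
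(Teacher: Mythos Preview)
Your argument is correct, but it follows a different path from the paper's proof. The paper proceeds by a direct induction on $n$: for each fixed first index $j$, the object $L_j\in\funcat{n-2}{\Cat}$ inherits the Segal isomorphisms in directions $2,\ldots,n-1$, so by the inductive hypothesis $L_j\in\cat{n-1}$; then taking $i=1$ in \eqref{eq-lem-char-obj} gives $L_{k_1}\cong\pro{L_1}{L_0}{k_1}$ in $\cat{n-1}$, and Proposition \ref{pro-ner-int-cat} finishes. By contrast, you push everything down one more level by applying the nerve $N$ in the $\Cat$-coordinate and then invoke the already-proved multinerve criterion of Lemma \ref{lem-multin-iff} a) once, rather than running a fresh induction. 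Your route is slightly more conceptual in that it reuses Lemma \ref{lem-multin-iff} wholesale, at the cost of an extra translation step (and the need to recover $L$ from $L'$); the paper's route is more self-contained and avoids moving between $\funcat{n-1}{\Cat}$ and $\funcat{n}{\Set}$. The only point you leave implicit is why $J_n L'\cong L$: this follows because the levelwise nerve $\bar N:\funcat{n-1}{\Cat}\rw\funcat{n}{\Set}$ is fully faithful (since $N$ is) and $\bar N L = X = \Nb{n}L' = \bar N(J_n L')$.
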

\begin{proof}
By induction on $n$. When $n=2$, $L\in\funcat{}{\Cat}$, $k\in \Dop$, $i=1$, $\uk(1,i)=1$, $\uk(0,i)=0$, $k_i=k=2$ and
\begin{equation*}
    L_k\cong\pro{L_1}{L_0}{k}\;.
\end{equation*}
Thus by Proposition \ref{pro-ner-int-cat}, $L\in\cat{2}$.

Suppose the lemma holds for $(n-1)$ and let $L \in \funcat{n-1}{\Cat}$ be as in the hypothesis. Consider $L_j\in\funcat{n-2}{\Cat}$ for $j\geq 0$. Let $\ur\in\dop{n-2}$ and denote $\uk=(j,\ur)\in\dop{n-1}$. Then, for any $2\leq i\leq n-1$, $k_i=r_{i-1}$ and
\begin{equation*}
    L_{\uk}=(L_j)_{\ur},\quad L_{\uk(1,i)}=(L_j)_{\ur(1,i-1)},\quad L_{\uk(0,i)}=(L_j)_{\ur(0,i-1)}\;.
\end{equation*}
Therefore \eqref{eq-lem-char-obj} implies
\begin{equation*}
    (L_j)_{\ur}=\pro{(L_j)_{\ur(1,i-1)}}{(L_j)_{\ur(0,i-1)}}{r_{i-1}}\;.
\end{equation*}
This means that $L_j$ satisfies the inductive hypothesis and therefore $L_j\in\cat{n-1}$.

Taking $i=1$ in \eqref{eq-lem-char-obj} we see that, for each $k_1\geq 2$ and $\ur=(k_2,\ldots,k_{n-1}\in\dop{n-2}$,
\begin{equation*}
    (L_{k_1})_{\ur}=(L_{k_1})_{\ur}=\pro{(L_1)_{\ur}}{(L_0)_{\ur}}{k_1}\;.
\end{equation*}
That is, we have isomorphisms in $\cat{n-1}$
\begin{equation*}
    L_{k_1}\cong \pro{L_1}{L_0}{k_1}\;.
\end{equation*}
We conclude from Proposition \ref{pro-ner-int-cat} the $L\in\cat{n}$.

\end{proof}

\subsection{Some functors on $ \Cat$}\label{sbs-funct-cat}

The connected component functor
\begin{equation*}
    q: \Cat\rw \Set
\end{equation*}
associates to a category its set of paths components. This is left adjoint to the discrete category functor
\begin{equation*}
    \di{1}:\Set \rw \Cat
\end{equation*}
associating to a set $X$ the discrete category on that set. We denote by
\begin{equation*}
    \zgu{1}:\Id\Rw \di{1}q
\end{equation*}
the unit of the adjunction $q\dashv \di{1}$.
\begin{lemma}\label{lem-q-pres-fib-pro}
    $q$ preserves fiber products over discrete objects and sends
    equivalences of categories to isomorphisms.
\end{lemma}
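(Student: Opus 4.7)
The plan is to prove the two claims separately, both by elementary observations about how connected components interact with the category structure.

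For the first assertion, the key point is that a discrete target forces a coproduct decomposition. If $C$ is discrete and $F\colon A\rw C$, $G\colon B\rw C$ are functors, then since $C$ has no non-identity arrows there are no morphisms in $A$ (resp.\ $B$) between objects with different images in $C$. Hence $A=\cop_{c\in C}A_c$ and $B=\cop_{c\in C}B_c$ where $A_c,B_c$ denote the fibers, and consequently
\begin{equation*}
A\times_C B \;\cong\; \cop_{c\in C}\, (A_c\times B_c).
\end{equation*}
Since $q$ is a left adjoint (to $\di{1}$) it preserves coproducts, so it suffices to know that $q$ preserves binary products. The canonical map $q(A_c\times B_c)\rw q(A_c)\times q(B_c)$ is surjective by construction, and injective because a zigzag $a\rw\cdots\lw a'$ in $A_c$ and $b\rw\cdots\lw b'$ in $B_c$ can be combined into a zigzag $(a,b)\rw\cdots\lw(a',b)\rw\cdots\lw(a',b')$ in the product. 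Combining these observations, and noting that $qC=C$ and $qA=\cop_c qA_c$ (and similarly for $B$), one obtains
\begin{equation*}
q(A\times_C B) \;\cong\; \cop_{c}\,qA_c\times qB_c \;\cong\; qA\times_{qC}qB,
\end{equation*}
and the isomorphism is clearly natural.

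For the second assertion, I would use the following standard observation: any natural transformation $\alpha\colon H\Rw H'$ between functors $H,H'\colon A\rw B$ satisfies $qH=qH'$, because for each $a\in A$ the component $\alpha_a\colon Ha\rw H'a$ places $Ha$ and $H'a$ in the same connected component of $B$. Now let $F\colon A\rw B$ be an equivalence with quasi-inverse $G\colon B\rw A$ and natural isomorphisms $GF\cong\Id_A$, $FG\cong\Id_B$. Applying the observation and the functoriality of $q$ gives $qG\circ qF=q(GF)=q(\Id_A)=\Id_{qA}$ and similarly $qF\circ qG=\Id_{qB}$, so $qF$ is a bijection.

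Neither step is truly an obstacle; the only subtlety is making sure the coproduct decomposition in the first part is justified before invoking preservation of coproducts, since $q$ is not a priori known to preserve arbitrary limits such as fiber products — it is precisely the discreteness of $C$ that converts the fiber product into a coproduct of products, a shape that $q$ does handle.
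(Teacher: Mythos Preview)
Your proof is correct and follows essentially the same route as the paper: decompose the fiber product over a discrete base as a coproduct of binary products, use that $q$ (as a left adjoint) preserves coproducts, and verify by an explicit zigzag argument that $q$ preserves binary products. Your treatment of the second claim is in fact slightly sharper than the paper's --- you show that $qF$ itself is a bijection via the observation that naturally related functors induce the same map on connected components, whereas the paper only records that $q\clC$ and $q\clD$ are isomorphic --- but the underlying idea is the same.
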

\begin{proof}
We claim that $q$ preserves products; that is, given categories
$\clC$ and $\clD$, there is a bijection
\begin{equation*}
    q(\clC\times \clD)=q(\clC)\times q(\clD)\;.
\end{equation*}
In fact, given $(c,d)\in q(\clC\times \clD)$ the map
$q(\clC\times\clD)\rw q(\clC)\times q(\clD)$ given by
$[(c,d)]=([c],[d])$ is well defined and is clearly surjective. On
the other hand, this map is also injective: given $[(c,d)]$ and
$[(c',d')]$ with $[c]=[c']$ and $[d]=[d']$, we have paths in $\clC$
\newcommand{\lin}{-\!\!\!-\!\!\!-}
\begin{equation*}
\xymatrix @R5pt{c \hspace{2mm} \lin \hspace{2mm}\cdots \hspace{2mm}
\lin
\hspace{2mm}c'\\
d \hspace{2mm} \lin \hspace{2mm}\cdots \hspace{2mm} \lin
\hspace{2mm}d' }
\end{equation*}
and hence a path in $\clC\times \clD$
\begin{equation*}
(c,d)\hspace{2mm}\lin\hspace{2mm}\cdots\hspace{2mm}\lin\hspace{2mm}(c',d)
\hspace{2mm}\lin\hspace{2mm}\cdots\hspace{2mm}\lin\hspace{2mm}(c',d')\;.
\end{equation*}
Thus $[(c,d)]=[(c',d')]$ and so the map is also injective, hence it
is a bijection, as claimed.

Given a diagram in $\Cat$ $\xymatrix{\clC\ar_{f}[r] & \clE & \clD
\ar^{g}[l]}$ with $\clE$ discrete, we have
\begin{equation}\label{eq-q-pres-fib-pro}
    \clC\tiund{\clE}\clD=\underset{x\in\clE}{\coprod}\clC_x\times
    \clD_x
\end{equation}
where $\clC_x,\;\clD_x$ are the full subcategories of $\clC$ and
$\clD$ with objects $c,\;d$ such that \;$f(c)=x=g(d)$. Since $q$ preserves
products and (being left adjoint) coproducts, we conclude by
\eqref{eq-q-pres-fib-pro} that
\begin{equation*}
    q(\clC\tiund{\clE}\clD)\cong q(\clC)\tiund{\clE}\,q(\clD)\;.
\end{equation*}
Finally, if $F:\clC\simeq \clD:G$ is an equivalence of categories,
$FG\,\clC\cong\clC$ and $FG\,\clD\cong \clD$ which implies that
$qF\,qG\,\clC\cong q\clC$ and $qF\,qG\,\clD\cong q\clD$, so $q\clC$
and $q\clD$ are isomorphic.
\end{proof}
The isomorphism classes of objects functot
\begin{equation*}
    p:\Cat\rw\Set
\end{equation*}
associates to a category the set of isomorphism classes of its objects. Notice that if $\clC$ is a groupoid, $p\clC=q\clC$.
\begin{lemma}\label{lem-p-pres-fib-pro}
    $p$ preserves pullbacks over discrete objects and sends
    equivalences of categories to isomorphisms.
\end{lemma}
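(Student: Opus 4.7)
The plan is to imitate closely the proof of Lemma \ref{lem-q-pres-fib-pro}, replacing $q$ with $p$ throughout, and flagging the one place where the argument must differ (namely, $p$ is not manifestly a left adjoint, so preservation of coproducts needs a direct check rather than being abstract nonsense).

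First I would show that $p$ preserves binary products. Given categories $\clC$ and $\clD$, the assignment $[(c,d)]\mapsto ([c],[d])$ defines a map $p(\clC\times\clD)\rw p\clC\times p\clD$; it is well defined because an isomorphism $(c,d)\cong (c',d')$ in $\clC\times\clD$ is exactly a pair of isomorphisms $c\cong c'$ and $d\cong d'$, and this same characterization of isomorphisms in the product makes the map a bijection (surjectivity is obvious, injectivity follows since given $c\cong c'$ in $\clC$ and $d\cong d'$ in $\clD$ we obtain $(c,d)\cong (c',d')$ in $\clC\times\clD$).

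Next, given a cospan $\xymatrix{\clC\ar^{f}[r] & \clE & \clD\ar_{g}[l]}$ with $\clE$ discrete, I would use the same decomposition as in Lemma \ref{lem-q-pres-fib-pro},
\begin{equation*}
    \clC\tiund{\clE}\clD=\underset{x\in\clE}{\coprod}\,\clC_x\times\clD_x,
\end{equation*}
and then combine product preservation with the fact that $p$ preserves coproducts. The latter is the one step that is slightly different from the proof for $q$: since $p$ is not visibly a left adjoint, I would verify directly that in a disjoint union of categories, every isomorphism is internal to a single summand (there are no morphisms between distinct summands at all), so the set of isomorphism classes of $\coprod_\alpha\clC_\alpha$ is the disjoint union of the $p\,\clC_\alpha$. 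Combining these two facts yields $p(\clC\tiund{\clE}\clD)\cong p\clC\tiund{\clE}p\clD$.

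Finally, for equivalences: if $F:\clC\rw\clD$ is an equivalence with pseudo-inverse $G:\clD\rw\clC$, the natural isomorphisms $GF\cong \Id_\clC$ and $FG\cong\Id_\clD$ provide, for every object $c$ of $\clC$, an isomorphism $GFc\cong c$, and similarly for $\clD$. Since any functor sends isomorphisms to isomorphisms, $pF$ and $pG$ are well defined, and the natural isomorphisms show they are inverse bijections on $p\clC$ and $p\clD$. The main (very mild) obstacle is just ensuring that coproducts are handled without appealing to an adjunction; everything else is essentially a transcription of the preceding lemma.
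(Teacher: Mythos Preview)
Your proof is correct, but the paper takes a different and slightly slicker route. Rather than rerunning the argument of Lemma~\ref{lem-q-pres-fib-pro} with $p$ in place of $q$, the paper observes that $p\clC = q(m\clC)$ where $m\clC$ is the maximal subgroupoid of $\clC$, and then reduces everything to Lemma~\ref{lem-q-pres-fib-pro}: one checks that $m$ commutes with products and coproducts (and fixes discrete categories), so $m(\clC\tiund{\clE}\clD)=m\clC\tiund{\clE}m\clD$, and then applies the already-established result for $q$. Your approach has the advantage of being self-contained and not introducing an auxiliary functor; the paper's approach is more economical in that it reuses the previous lemma wholesale rather than reproving product and coproduct preservation from scratch. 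Both handle the equivalence clause in essentially the same way.
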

\begin{proof}
For a category $\clC$, let $m\clC$ be its maximal subgroupoid. Then $p\clC=qm\clC$. Given a diagram in $\Cat$ $\xymatrix{\clC\ar_{f}[r] & \clE & \clD
\ar^{g}[l]}$ with $\clE$ discrete, we have
\begin{equation*}
    \clC\tiund{\clE}\clD=\underset{x\in\clE}{\coprod}\clC_x\times
    \clD_x\;.
\end{equation*}
Since, as easily seen, $m$ commutes with (co)products, and $m\clE=\clE$, we obtain $m(\clC\tiund{\clE}\clD)=m\clC\tiund{\clE}m\clD$; so by Lemma \ref{lem-q-pres-fib-pro},
\begin{align*}
    p(\clC\tiund{\clE}\clD)&=qm(\clC\tiund{\clE}\clD)=q(m\clC\tiund{\clE}m\clD)
    =qm\clC\tiund{q\clE}qm\clD=p\clC\tiund{\clE}p\clD\;.
\end{align*}
Finally, if $F:\clC\simeq \clD:G$ is an equivalence of categories, $FG\clC\cong \clC$ and $FG\clD\cong\clD$ which implies that $pF\,pG\,\clC\cong p\clC$ and $qF\,qG\,\clD\cong q\clD$, so $q\clC$ and $q\clD$ are isomorphic.
\end{proof}
\subsection{Pseudo-functors and their strictification}\label{sbs-pseudo-functors}
The functor 2-category $\funcat{n}{\Cat}$ is 2-monadic over $[ob(\Dnop),\Cat]$ where $ob(\Dnop)$ is the set of objects of $\Dnop$. Let
\begin{equation*}
    U:\funcat{n}{\Cat}\rw [ob(\Dnop),\Cat]
\end{equation*}
be the forgetful functor $(UX)_{\uk}=X_{\uk}$. Its left adjoint $F$ is given on objects by
\begin{equation*}
    (FH)_{\uk}=\underset{\ur\in ob(\Dnop)}{\coprod}\Dnop(\ur,\uk)\times H_{\ur}
\end{equation*}
for $H\in [ob(\Dmenop),\Cat]$, $\uk\in ob(\Dmenop)$. If $T$ is the monad corresponding to the adjunction $F\dashv U$, then
\begin{equation*}
    (TH)_{\uk}=\underset{\ur\in ob(\Dnop)}{\coprod}\Dnop(\ur,\uk)\times H_{\ur}
\end{equation*}
A pseudo $T$-algebra is given by $H\in [ob(\Dnop),\Cat]$,
\begin{equation*}
    h_{n}: \underset{\ur\in ob(\Dnop)}{\coprod}\Dnop(\ur,\uk)\times H_{\ur} \rw H_{\uk}
\end{equation*}
and additional data, as described in \cite{PW}. This amounts precisely to functors from $\Dnop$ to $\Cat$ and the 2-category $\sf{Ps\mi T\mi alg}$ of pseudo $T$-algebras corresponds to the 2-category $\Ps\funcat{n}{\Cat}$ of pseudo-functors, pseudo-natural transformations and modifications.

The strictification result proved in \cite{PW} yields that every pseudo-functor from $\Dnop$ to $\Cat$ is equivalent, in $\Ps\funcat{n}{\Cat}$, to a 2-functor.

Given a pseudo $T$-algebra as above, \cite{PW} consider the factorization  of $h:TH\rw H$ as
\begin{equation*}
    TH\xrw{v}L\xrw{g}H
\end{equation*}
with $v_{\uk}$ bijective on objects and $g_{\uk}$ fully faithful, for each $\uk\in\Dnop$. It is shown in \cite{PW} that it is possible to give a strict $T$-algebra structure $TL\rw L$ such that $(g,Tg)$ is an equivalence of pseudo $T$-algebras. It is immediate to see that, for each $\uk\in\Dnop$, $g_{\uk}$ is an equivalence of categories.

Further, it is shown in \cite{Lack} that $\St:\psc{n}{\Cat}\rw\funcat{n}{\Cat}$ as described above is left adjoint to the inclusion
\begin{equation*}
  J:\funcat{n}{\Cat}\rw\psc{n}{\Cat}
\end{equation*}
 and that the components of the units are equivalences in $\psc{n}{\Cat}$.

\section{Weakly globular n-fold categories}\label{sec-wg-nfold-categ}
In this section we define weakly globular \nfol categories and establish their main properties.

 The definition of weakly globular \nfol category uses the notion of homotopically discrete \nfol category from \cite{Pa1} in order to formulate the weak globularity condition. We recall this notion and its main properties in Section \ref{sbs-hom-disc-nfold}.

 In Section \ref{sbs-def-wg-nf-cat} we inductively define of weakly globular \nfol categories and of $n$-equivalences between them. In Section \ref{sbs-prop-wg-nf-cat} we establish the main properties of weakly globular \nfol categories. We show in Proposition \ref{pro-crit-ncat-be-wg} b) a criterion for a \nfol category to be weakly weakly globular playing a crucial role in the proof of the main result Theorem \ref{the-strict-funct}.
\subsection{Homotopically discrete \nfol categories}\label{sbs-hom-disc-nfold}
\begin{definition}\label{def-hom-dis-ncat}
    Define inductively the full subcategory $\cathd{n}\subset\cat{n}$ of homotopically discrete \nfol categories.

    For $n=1$, $\cathd{1}=\cathd{}$ is the category of  equivalence relations. Denote by $\p{1}=p:\Cat\rw\Set$ the isomorphism classes of object functor.

    Suppose, inductively, that for each $1\leq k\leq n-1$ we defined $\cathd{k}\subset\cat{k}$ and $k$-equivalences such that the following holds:
    \begin{itemize}
      \item [a)] The $k^{th}$ direction in $\cathd{k}$ is groupoidal; that is, if $X\in\cathd{k}$, $\xi_{k}X\in\Gpd(\cat{k-1})$ (where $\xi_{k}X$ is as in Proposition \ref{pro-assoc-iso}).

      \item [b)] There is a functor $\p{k}:\cathd{k}\rw\cathd{k-1}$ making the following diagram commute:
          \begin{equation}\label{eq-p-fun-def}
            \xymatrix{
            \cathd{k} \ar^{\Nu{k-1}...\Nu{1}}[rrr] \ar_{p^{(k)}}[d] &&& \funcat{k-1}{\Cat} \ar^{\bar p}[d]\\
            \cathd{k-1} \ar_{\N{k-1}}[rrr] &&& \funcat{k-1}{\Set}
            }
          \end{equation}
          Note that this implies that $(\p{k}X)_{s_1 ... s_{k-1}}=p X_{s_1 ... s_{k-1}}$ for all $(s_1 ... s_{k-1})\in\dop{k-1}$.

    \end{itemize}

   $\cathd{n}$ is the full subcategory of $\funcat{}{\cathd{n-1}}$ whose objects $X$ are such that
    \bigskip
    \begin{itemize}
      \item [(i)]  $\hspace{30mm} X_s\cong\pro{X_1}{X_0}{s} \quad \mbox{for all} \; s \geq 2.$

\medskip
    In particular this implies that $X\in \Cat(\Gpd(\cat{n-2})) =\Gpd(\cat{n-1})$ and the $n^{th}$ direction in $X$ is groupoidal.
    \medskip
      \item [(ii)] The functor
      \begin{equation*}
        \op{n-1}:\cathd{n}\subset \funcat{}{\cathd{n-1}}\rw\funcat{}{\cathd{n-2}}
      \end{equation*}
      restricts to a functor
      \begin{equation*}
        \p{n}:\cathd{n}\rw\cathd{n-1}
      \end{equation*}
     Note that this implies that $(\p{n}X)_{s_1 ... s_{n-1}}=p X_{s_1 ... s_{n-1}}$ and that the following diagram commutes
          \begin{equation}\label{eq-p-fun-def}
            \xymatrix{
            \cathd{n} \ar^{\Nu{n-1}...\Nu{1}}[rrr] \ar_{p^{(n)}}[d] &&& \funcat{n-1}{\Cat} \ar^{\bar p}[d]\\
            \cathd{n-1} \ar_{\N{n-1}}[rrr] &&& \funcat{n-1}{\Set}
            }
          \end{equation}
     \end{itemize}
\end{definition}
\mk
\begin{definition}\label{def-hom-dis-ncat-1}

    Denote by $\zgu{n}_X:X\rw \di{n}\p{n}X$ the morphism given by
    \begin{equation*}
        (\zgu{n}_X)_{s_1...s_{n-1}} :X_{s_1...s_{n-1}} \rw d p X_{s_1...s_{n-1}}
    \end{equation*}
    for all  $(s_1,...,s_{n-1})\in \dop{n-1}$. Denote by
    \begin{equation*}
        X^d =\di{n}\di{n-1}...\di{1}\p{1}\p{2}...\p{n}X
    \end{equation*}
    and by $\zg\lo{n}$ the composite
    \begin{equation*}
        X\xrw{\zgu{n}}\di{n}\p{n}X \xrw{\di{n}\zgu{n-1}} \di{n}\di{n-1}\p{n-1}\p{n}X \rw \cdots \rw X^d\;.
    \end{equation*}
    For each $a,b\in X_0^d$ denote by $X(a,b)$ the fiber at $(a,b)$ of the map
    \begin{equation*}
        X_1 \xrw{(d_0,d_1)} X_0\times X_0 \xrw{\zg\lo{n}\times\zg\lo{n}} X_0^d\times X_0^d\;.
    \end{equation*}
\end{definition}
\begin{definition}\label{def-hom-dis-ncat-1}
Define inductively $n$-equivalences in $\cathd{n}$. For $n=1$, a 1-equivalence is an equivalence of categories. Suppose we defined $\nm$-equivalences in $\cathd{n-1}$. Then a map $f:X\rw Y$ in $\cathd{n}$ is an $n$-equivalence if, for all $a,b \in X_0^d$, $f(a,b):X(a,b) \rw Y(fa,fb)$ and $\p{n}f$ are $\nm$-equivalences.
\end{definition}
The main properties of the category $\cathd{n}$ are summarized in the proposition below, whose proof can be found in \cite{Pa1}
\begin{proposition}\rm{ \cite{Pa1}}\label{pro-Pa1}
\begin{itemize}
  \item [a)] $\zgu{n}$ and $\zgb{n}$ are $(n-1)$-equivalences.\mk

  \item [b)] For each $X\in\cathd{n}$ the induced Segal maps
  \begin{equation*}
    X_k \rw \pro{X_1}{X_0^d}{k}
  \end{equation*}
  for each $k\geq 2$ are $(n-1)$-equivalences.\mk

  \item [c)] $f:X\rw Y$ in $\cathd{n}$ is an $n$-equivalence if and only if $X^d \cong Y^d$.\mk

  \item [d)] The functor $J_n:\cat{n}\rw \funcat{n-1}{\Cat}$ restricts to a functor $J_n: \cathd{n}\rw \funcat{n-1}{\cathd{}}$.
\end{itemize}
\end{proposition}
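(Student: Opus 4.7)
The plan is to proceed by joint induction on $n$, establishing all four statements simultaneously. The base case $n=1$ is essentially direct: $\cathd{1}$ is the category of equivalence relations viewed as categories, and for such an $X$ the map $\zgu{1}_X:X\rw dpX$ collapses each equivalence class (which as a full subcategory is a contractible groupoid) to a point, hence is an equivalence of categories; statement (b) is vacuous at this level, (c) follows because an equivalence of equivalence relations is detected by the induced bijection of equivalence classes, and (d) is immediate since $\cathd{}\subset\Cat$.

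For the inductive step, assume (a)--(d) hold in all dimensions $\leq n-1$. I would first dispatch (d): given $X\in\cathd{n}$, each $X_s$ lies in $\cathd{n-1}$, so by the inductive form of (d), $J_{n-1}X_s\in\funcat{n-2}{\cathd{}}$; assembling over $s\in\Dop$ yields $J_n X\in\funcat{n-1}{\cathd{}}$. For (a), apply the inductive form of (a) levelwise: each component $(\zgb{n-1})_{X_s}:X_s\rw(X_s)^d$ is an $(n-2)$-equivalence in $\cathd{n-1}$, and the assembled map $\zgu{n}_X:X\rw \di{n}\p{n}X$ is verified to satisfy the defining hom-wise and $\p{n}$-wise conditions by invoking Lemma \ref{lem-p-pres-fib-pro}. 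The map $\zgb{n}$ is then a composite of such discretization steps in successive directions, and the claim follows from the $2$-out-of-$3$ property of $(n-1)$-equivalences, itself established inductively.

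For (b), combine the genuine Segal isomorphism $X_k\cong\pro{X_1}{X_0}{k}$ provided by condition (i) in Definition \ref{def-hom-dis-ncat} with part (a) in dimension $n-1$: the induced Segal map compares the above to $\pro{X_1}{X_0^d}{k}$, and the discrepancy factors through the $(n-2)$-equivalence $X_0\rw X_0^d$. Preservation of fibered products over discrete objects (Lemma \ref{lem-p-pres-fib-pro} and its iterated version within $\cathd{n-1}$) then promotes this to an $(n-1)$-equivalence. For (c), the forward direction follows by iterating (a) together with the inductive form of (c) in lower dimensions and using that the global discretization $(-)^d$ sends $(n-1)$-equivalences to isomorphisms; the reverse direction factors $f:X\rw Y$ as $X\rw X^d\cong Y^d\lw Y$ and applies the $2$-out-of-$3$ property in conjunction with (a) and (b).

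The main obstacle is navigating the nested inductive dependencies: at level $n$, each of (a)--(d) relies on the full package at level $n-1$, and within level $n$, part (c) relies on both (a) and (b). A subsidiary technical point is ensuring that the $2$-out-of-$3$ property for $(n-1)$-equivalences and the compatibility of each $\p{k}$ with fibered products over discrete objects propagate cleanly through the induction, so that limit-diagrams computed in $\cathd{n-1}$ can be compared to their discretized counterparts without losing the equivalence information.
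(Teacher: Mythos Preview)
The paper does not prove this proposition: the sentence immediately preceding it reads ``whose proof can be found in \cite{Pa1}'', and the statement itself carries the citation \cite{Pa1}. There is therefore no in-paper argument to compare your proposal against.

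On its own terms your outline is the natural joint induction one would expect, and the overall architecture (establish (d) first, then (a), then (b), then (c)) is sound. Two small points. First, in part (c) the reverse direction is not a \emph{factorization} of $f$ through $X^d\cong Y^d$; what you actually have is the naturality square $\zgb{n}_Y\circ f = f^d\circ \zgb{n}_X$, and since $f^d$ is an isomorphism and both discretization maps are equivalences by (a), $2$-out-of-$3$ yields the claim. Second, you are right to flag that $2$-out-of-$3$ for $n$-equivalences (and the compatibility of $\p{k}$ with pullbacks over discrete objects) must themselves be carried through the induction; these are genuine ingredients handled in \cite{Pa1}, not consequences of anything in the present paper. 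Finally, note a small inconsistency in the paper: part (a) as stated here calls $\zgb{n}$ an $(n-1)$-equivalence, whereas Corollary~\ref{cor2-crit-nequiv-rel} later invokes it as an $n$-equivalence; when you consult \cite{Pa1} you should expect the latter.
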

\subsection{The definition of weakly globular \nfol categories}\label{sbs-def-wg-nf-cat}
In this section we define the category $\catwg{n}$ of weakly globular \nfol categories and $n$-equivalences.

 The idea of the definition is to build the structure by induction on dimension starting with the category $\Cat$ with equivalences of categories.

  At dimension $n$, the structure is a full subcategory of simplicial objects in $\catwg{n-1}$. Unraveling this definition, this affords an embedding
\begin{equation*}
        \xymatrix{
        \catwg{n}\ar@{^(->}^(0.35){J_n}[r] & \funcat{n-1}{\Cat}\;.
        }
\end{equation*}
The first condition for $X\in\funcat{}{\catwg{n-1}}$ to be an object of $\catwg{n}$ is the weak globularity condition that $X_0$ is homotopically discrete.

 The set underlying the discrete $(n-1)$-fold category $X_0^d$ plays the role of set of cells in dimension $(n-1)$. When $0\leq r\leq n-1$, the set underlying $(J_n X)^d_{1\oset{r}{\cdots}10}$ corresponds to the set of $r$-cells.

    The next condition in the definition of $\catwg{n}$ is that the Segal maps
    \begin{equation*}
        X_k\rw \pro{X_1}{X_0}{k}
    \end{equation*}
are isomorphisms for all $k\geq 2$. Since each $X_k\in\cat{n-1}$, by the characterization of internal categories via the Segal condition (Proposition \ref{pro-ner-int-cat}) it follows that $X$ is an \nfol category.

We further require the induced Segal map condition stating that, for each $k\geq 2$, the maps in $\catwg{n-1}$
\begin{equation*}
    X_k \rw \pro{X_1}{X_0^d}{k}
\end{equation*}
are $(n-1)$-equivalences. This condition controls the compositions of higher cells and is the analogue of the Segal condition in the Tamsamani-Simpson model \cite{Ta}, \cite{Simp}.

 We finally require the existence of a truncation functor $\p{n}$ from $\catwg{n}$ to $\catwg{n-1}$ obtained by applying dimensionwise the isomorphism classes of object functor to the corresponding diagram in $\funcat{n-1}{\Cat}$. In the case $n=2$, this last condition is redundant.

The functor $\p{n}$ is used to define $n$-equivalences, thus completing the inductive step in the definition of $\catwg{n}$. The definition of $n$-equivalences is given in terms of two conditions: the first is a higher dimensional generalization of the notion of fully faithfulness of a functor, the second is a generalization of 'essentially surjective on objects'.

\begin{definition}\label{def-n-equiv}
    For $n=1$, $\catwg{1}=\Cat$ and $1$-equivalences are equivalences of categories.

    Suppose, inductively, that we defined $\catwg{n-1}$ and $(n-1)$-equivalences. Then $\catwg{n}$ is the full subcategory of $\funcat{}{\catwg{n-1}}$ whose objects $X$ are such that
    \begin{itemize}
      \item [a)] \textsl{Weak globularity condition} $X_0\in\cathd{n-1}$.\mk
      \item [b)] \textsl{Segal condition} For all $k\geq 2$ the Segal maps are isomorphisms:
      \begin{equation*}
        X_k\cong\pro{X_1}{X_0}{k}\;.
      \end{equation*}

      \item [c)] \textsl{Induced Segal condition} For all $k\geq 2$ the induced Segal maps
      \begin{equation*}
        X_k\rw\pro{X_1}{X^d_0}{k}
      \end{equation*}
      (induced by the map $\zg:X_0\rw X_0^d$) are $(n-1)$-equivalences.\mk

      \item [d)] \textsl{Truncation functor} There is a functor $\p{n}:\cathd{n}\rw\cathd{n-1}$ making the following diagram commute
      \begin{equation*}
        \xymatrix{
        \catwg{n} \ar^{J_n}[rr] \ar_{\p{n}}[d] && \funcat{n-1}{\Cat} \ar^{\ovl p}[d]\\
        \catwg{n-1} \ar^{\Nb{n-1}}[rr]  && \funcat{n-1}{\Set}
        }
    \end{equation*}
    \end{itemize}
    Given $a,b\in X_0^d$, denote by $X(a,b)$ the fiber at $(a,b)$ of the map
    \begin{equation*}
         X_1\xrw{(\pt_0,\pt_1)} X_0\times X_0 \xrw{\zg\times \zg}  X^d_0\times X^d_0\;.
    \end{equation*}
    We say that a map $f:X\rw Y$ in $\catwg{n}$ is an $n$-equivalence if
    \begin{itemize}
      \item [i)] For all $a,b\in X_0^d$
      \begin{equation*}
        f(a,b): X(a,b) \rw Y(fa,fb)
      \end{equation*}
      is an $(n-1)$-equivalence.\mk

      \item [ii)] $\p{n}f$ is an $(n-1)$-equivalence.
    \end{itemize}
    This completes the inductive step in the definition of $\catwg{n}$.
\end{definition}
\begin{remark}\label{rem-n-equiv}
    It follows by Definition \ref{def-n-equiv}, Definition \ref{def-hom-dis-ncat} and Proposition \ref{pro-Pa1} that $\cathd{n}\subset \catwg{n}$.
\end{remark}
\subsection{Properties of weakly globular \nfol categories}\label{sbs-prop-wg-nf-cat}
In this section we discuss the main properties of weakly globular \nfol categories. In Proposition \ref{pro-nequiv-to-obj} we show that a weakly globular \nfol category $n$-equivalent to a homotopically discrete one is homotopically discrete. This generalizes to higher dimension the fact that  a category equivalent an equivalence relation is an equivalence relation. We deduce in Corollary \ref{cor-crit-nequiv-rel} a criterion for a weakly globular \nfol category to be homotopically discrete.

The main result of this section, Proposition \ref{pro-crit-ncat-be-wg} b), gives a criterion for an \nfol category to be weakly globular.  This result will be used crucially in the proof of Proposition \ref{pro-transf-wg-struc} to characterize \nfol categories levelwise equivalent to Segalic pseudo-functors. This leads to the main result Theorem \ref{the-strict-funct} on the strictification of Segalic pseudo-functors.

The proof of Proposition \ref{pro-crit-ncat-be-wg} b) uses an inductive argument in conjunction with the proof of a property of the category $\catwg{n}$ (Proposition \ref{pro-crit-ncat-be-wg} a)): the fact that the nerve functor in direction 2, when applied to $\catwg{n}$ takes values in $\funcat{}{\catwg{n-1}}$.

\begin{definition}\label{def-pn}
    For each $1\leq j \leq n$ denote
    \begin{align*}
        \p{j,n} & = \p{j}\p{j-1}\cdots \p{n}:\catwg{n}\rw \catwg{j-1}\\
        \p{n,n}& = \p{n}\;.
    \end{align*}
\end{definition}
\begin{lemma}\label{lem-prop-pn}
    For each $X\in\catwg{n}$, $1\leq j < n$ and $s\geq 2$ it is
    \begin{equation}\label{eq-lem-prop-pn}
    \begin{split}
        &\p{j,n-1}X_s \cong  \p{j,n-1}(\pro{X_1}{X_0}{s})=\\
         & =\pro{\p{j,n-1} X_1}{\p{j,n-1} X_0}{s}\;.
    \end{split}
    \end{equation}
\end{lemma}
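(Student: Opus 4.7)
The first isomorphism $\p{j,n-1}X_s \cong \p{j,n-1}(\pro{X_1}{X_0}{s})$ is immediate from the Segal condition (Definition \ref{def-n-equiv}(b)), which gives $X_s \cong \pro{X_1}{X_0}{s}$ in $\catwg{n-1}$, combined with functoriality of $\p{j,n-1}$.

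My plan for the second equality is to bypass any direct pullback-preservation argument, and instead exploit the compatibility of the truncation functors with the simplicial structure. The key observation is the identification
\begin{equation*}
(\p{n}X)_s \;\cong\; \p{n-1}X_s \qquad \text{for every } X\in\catwg{n} \text{ and } s\geq 0.
\end{equation*}
This follows by comparing multinerves: Definition \ref{def-n-equiv}(d) applied to both $\p{n}\colon \catwg{n} \to \catwg{n-1}$ and $\p{n-1}\colon \catwg{n-1} \to \catwg{n-2}$ shows that both sides have multinerve sending $\underline{r} \in \dop{n-2}$ to $p((J_{n-1}X_s)_{\underline{r}}) = p((J_n X)_{s,\underline{r}})$, and since $\Nb{n-2}$ is fully faithful the two objects in $\catwg{n-2}$ coincide. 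Iterating this identification along the compositions defining $\p{j+1,n}$ and $\p{j,n-1}$ yields, for each $1 \leq j < n$ and $s\geq 0$,
\begin{equation*}
(\p{j+1,n}X)_s \;\cong\; \p{j,n-1}X_s \,.
\end{equation*}

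Now $\p{j+1,n}X$ lies in $\catwg{j}$, so it satisfies the Segal condition (Definition \ref{def-n-equiv}(b)), which in the boundary case $j=1$ is read via the nerve on $\Cat = \catwg{1}$:
\begin{equation*}
(\p{j+1,n}X)_s \;\cong\; \pro{(\p{j+1,n}X)_1}{(\p{j+1,n}X)_0}{s}.
\end{equation*}
Combining this identity at $s$, at $1$ and at $0$ with the previous identification gives
\begin{equation*}
\p{j,n-1}X_s \;\cong\; \pro{\p{j,n-1}X_1}{\p{j,n-1}X_0}{s},
\end{equation*}
which together with the first isomorphism delivers the lemma. The main obstacle is a purely notational one: carefully verifying the identity $(J_n X)_{s,\underline{r}} = (J_{n-1}X_s)_{\underline{r}}$ under the conventions of Notation \ref{not-ner-func-dirk}, and checking that the dimension-shifted forms of Definition \ref{def-n-equiv}(d) applied to $\p{n}$ and to $\p{n-1}$ produce genuinely the same multinerve datum at each $\underline{r}\in\dop{n-2}$.
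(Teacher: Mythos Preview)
Your proposal is correct and follows essentially the same approach as the paper's own proof: both hinge on the identity $(\p{j+1,n}X)_s \cong \p{j,n-1}X_s$ and then apply the Segal condition to $\p{j+1,n}X\in\catwg{j}$. Your write-up is simply more explicit than the paper's, which asserts the identity $(\p{j+1}\cdots\p{n}X)_s = \p{j}\cdots\p{n-1}X_s$ without spelling out the multinerve comparison.
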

\begin{proof}
Since $X\in\catwg{n}$ by definition $\p{n}X\in\catwg{n-1}$, hence
\begin{equation*}
    \p{n-1}(\pro{X_1}{X_0}{s})= \pro{\p{n-1} X_1}{\p{n-1} X_0}{s}
\end{equation*}
which is \eqref{eq-lem-prop-pn} for $j=n-1$. Since $\p{j+1,n}X \in\catwg{j}$ for $1\leq j\leq (n-1)$,  its Segal maps are isomorphisms. Further for all $s\geq 0$
\begin{equation*}
    (\p{j+1}...\p{n}X)_s = \p{j}...\p{n-1}X_s = \p{j,n-1} X_s
\end{equation*}
with $X_s=\pro{X_1}{X_0}{s}$ for $s\geq 2$. This proves \eqref{eq-lem-prop-pn}.
\end{proof}
\begin{remark}\label{rem-eq-def-wg-ncat}
    It follows immediately from Lemma \ref{lem-prop-pn} that if $X\in\catwg{n}$, for all $s\geq 2$
    \begin{equation}\label{eq1-rem-eq-def-wg-ncat}
        X_{s0}^d=(\pro{X_{10}}{X_{00}}{s})^d=\pro{X^d_{10}}{X^d_{00}}{s}\;.
    \end{equation}
    In fact, by \eqref{eq1-rem-eq-def-wg-ncat} in the case $j=2$, taking the 0-component, we obtain
    \begin{equation*}
    \begin{split}
        & \p{1,n-2}(\pro{X_{10}}{X_{00}}{s})= \\
        =\ & \pro{\p{1,n-2}X_{10}}{p...\p{n-2}X_{00}}{s}
    \end{split}
    \end{equation*}
    which is the same as \eqref{eq1-rem-eq-def-wg-ncat}.
\end{remark}
The following proposition is a higher dimensional generalization of the fact that, if a category is equivalent to an equivalence relation, it is itself an equivalence relation.
\begin{proposition}\label{pro-nequiv-to-obj} 
    Let $f:X\rw Y$ be a $\nequ$ in $\catwg{n}$ with $Y\in\cathd{n}$, then $X\in\cathd{n}$.
\end{proposition}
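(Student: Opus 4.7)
The plan is to induct on $n$. For the base case $n=1$, an equivalence of categories $f:X\rw Y$ with $Y\in\cathd{1}$ forces $X$ to be an equivalence relation: full-faithfulness gives $\Hom_X(a,b)\cong\Hom_Y(fa,fb)$ of cardinality at most one, and every morphism in $X$ is invertible by transporting the inverse of its image in $Y$ through the equivalence.

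For the inductive step, assume the statement in dimension $n-1$, and let $f:X\rw Y$ be an \nequ\ in $\catwg{n}$ with $Y\in\cathd{n}$. Unpacking Definition \ref{def-hom-dis-ncat}, I need to verify that $\p{n}X\in\cathd{n-1}$ and that each $X_s$ lies in $\cathd{n-1}$ (the Segal isomorphism $X_s\cong\pro{X_1}{X_0}{s}$ for $s\geq 2$ is already part of $X\in\catwg{n}$). The first point is a direct application of the inductive hypothesis to $\p{n}f:\p{n}X\rw\p{n}Y$: this is an \equ{n-1}\ by the definition of \nequ, and $\p{n}Y\in\cathd{n-1}$ because $Y\in\cathd{n}$.

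To handle the $X_s$, I treat $X_1$ first by exploiting the discretization. Since $X_1\xrw{(d_0,d_1)}X_0\times X_0\xrw{\zg\times\zg}X_0^d\times X_0^d$ lands in a set, $X_1$ splits as $\coprod_{(a,b)\in X_0^d\times X_0^d}X(a,b)$, and analogously for $Y_1$. Each fiber $Y(fa,fb)$ inherits membership in $\cathd{n-1}$ from $Y_1\in\cathd{n-1}$, and each $f(a,b):X(a,b)\rw Y(fa,fb)$ is an \equ{n-1}\ by definition of \nequ. Applying the inductive hypothesis fiberwise yields $X(a,b)\in\cathd{n-1}$, whence $X_1\in\cathd{n-1}$ as a coproduct. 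For $s\geq 2$, I would use the induced Segal condition built into $X\in\catwg{n}$: the map $X_s\rw\pro{X_1}{X_0^d}{s}$ is an \equ{n-1}, and its target, being a fiber product over a discrete object, decomposes as $\coprod X(a_0,a_1)\times\cdots\times X(a_{s-1},a_s)$, hence lies in $\cathd{n-1}$ by closure under products and coproducts. One more pass of the inductive hypothesis then forces $X_s\in\cathd{n-1}$.

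I expect the main subtlety to be the closure manipulations used above: that a fiber of an object of $\cathd{n-1}$ over a point of a discrete object is again homotopically discrete, and that $\cathd{n-1}$ is stable under the coproducts and products arising in the decomposition of $\pro{X_1}{X_0^d}{s}$. These should follow from the compatibility of the iterated discretization functors with pullbacks over discrete objects (Lemmas \ref{lem-q-pres-fib-pro} and \ref{lem-p-pres-fib-pro}) together with the recursive definition of $\cathd{n-1}$ in Definition \ref{def-hom-dis-ncat}.
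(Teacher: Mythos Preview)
Your argument is correct and follows essentially the same route as the paper's own proof: induction on $n$, treating $\p{n}X$ via $\p{n}f$, decomposing $X_1$ as $\coprod_{a,b}X(a,b)$ and applying the inductive hypothesis fiberwise, then handling $X_s$ for $s\geq 2$ via the induced Segal map. The closure properties you flag as subtleties (that $Y(fa,fb)\in\cathd{n-1}$, that $\cathd{n-1}$ is closed under coproducts, and that $\pro{X_1}{X_0^d}{s}\in\cathd{n-1}$) are precisely the external inputs the paper imports from \cite[Lemma~3.8]{Pa1}; the paper does not rederive them either.
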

\begin{proof}
By induction on $n$. It is clear for $n=1$. Suppose it is true for $n-1$ and let $f$ be as in the hypothesis. Then $\p{n}f:\p{n} X\rw\p{n}Y$ is a $\equ{n-1}$ with $\p{n}Y\in\cathd{n-1}$ since $Y\in\cathd{n}$. It follows by induction hypothesis that $\p{n}X\in\cathd{n-1}$. We have
\begin{equation}\label{eq1-pro-nequiv}
    X_1=\uset{a,b\in X_0^d}{\cop}X(a,b)\;.
\end{equation}
Since $f$ is a $\nequ$, there are $\equ{n-1}$s
\begin{equation*}
    f(a,b):X(a,b)\rw Y(fa,fb)
\end{equation*}
where $Y(fa,fb)\in\cathd{n-1}$ since $Y\in\cathd{n}$. By induction hypothesis, it follows that $X(a,b)\in\cathd{n-1}$. From \eqref{eq1-pro-nequiv} and the fact that $\cathd{n-1}$ is closed under coproducts (see \cite{Pa1}, Lemma 3.8 a)), we conclude that $X_1\in\cathd{n-1}$.

Since $X\in \catwg{n}$, the induced Segal maps
\begin{equation*}
    \hmu{s}:X_s=\pro{X_1}{X_0}{s}\rw \pro{X_1}{X^d_0}{s}
\end{equation*}
is a $\equ{n-1}$. Since, from above, $X_1$ is homotopically discrete and $X_0^d$ is discrete, by Lemma \cite[Lemma 3.8 c]{Pa1},
\begin{equation*}
  \pro{X_1}{X^d_0}{s}\in\cathd{n-1}
\end{equation*}

Thus by induction hypothesis applied to the induced Segal map $\hmu{s}$ we conclude that $X_s\in\cathd{n-1}$ for all $s\geq 0$.

In summary, we showed that $X\in\catwg{n}$ is such that $X_s\in\cathd{n-1}$ for all $s\geq 0$ and $\p{n}X\in\cathd{n-1}$. Therefore, by definition, $X\in\cathd{n}$.
\end{proof}
\begin{corollary}\label{cor-crit-nequiv-rel}
    Let $X\in\catwg{n}$ be such that $X_1$ and $\pn X$ are in $\cathd{n-1}$. Then $X\in\cathd{n}$.
\end{corollary}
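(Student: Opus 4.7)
The plan is to combine the definition of $\catwg{n}$ with Proposition \ref{pro-nequiv-to-obj}, reducing the problem to showing $X_s \in \cathd{n-1}$ for every $s \geq 0$, since the Segal condition \textup{(i)} and the truncation condition \textup{(ii)} in the definition of $\cathd{n}$ follow directly from $X \in \catwg{n}$ together with the hypothesis $\pn X \in \cathd{n-1}$.

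For $s=0$ and $s=1$ the claim is immediate: the weak globularity condition gives $X_0 \in \cathd{n-1}$ and $X_1 \in \cathd{n-1}$ is given by hypothesis. For $s \geq 2$ I would argue as follows. Since $X \in \catwg{n}$, each $X_s$ lies in $\catwg{n-1}$, and the induced Segal map
\[
\hmu{s}: X_s \;\longrightarrow\; \pro{X_1}{X_0^d}{s}
\]
is an $(n-1)$-equivalence. The target is built from the homotopically discrete object $X_1$ via pullbacks over the discrete object $X_0^d$, so by \cite[Lemma 3.8 c]{Pa1} it belongs to $\cathd{n-1}$. Now apply Proposition \ref{pro-nequiv-to-obj} to $\hmu{s}$: an $(n-1)$-equivalence in $\catwg{n-1}$ with homotopically discrete target forces the source to be homotopically discrete, so $X_s \in \cathd{n-1}$.

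Having established $X_s \in \cathd{n-1}$ for all $s \geq 0$, the object $X$ lies in $\funcat{}{\cathd{n-1}}$. The Segal isomorphisms $X_s \cong \pro{X_1}{X_0}{s}$ for $s \geq 2$ hold because $X \in \catwg{n}$, yielding condition \textup{(i)} of Definition \ref{def-hom-dis-ncat}, while condition \textup{(ii)} — that $\p{n}$ restricts to a functor into $\cathd{n-1}$ — is precisely the hypothesis that $\pn X \in \cathd{n-1}$, together with functoriality of $\p{n}$. Hence $X \in \cathd{n}$.

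No step appears to be a real obstacle: the whole argument is a packaging exercise, and the only subtlety is recognizing that one must pass through the \emph{induced} Segal map (whose target involves the discretization $X_0^d$) rather than the ordinary Segal decomposition over $X_0$, in order to bring \cite[Lemma 3.8 c]{Pa1} to bear and thereby set up the application of Proposition \ref{pro-nequiv-to-obj}.
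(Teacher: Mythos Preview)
Your proof is correct and follows essentially the same approach as the paper: use the induced Segal maps $\hmu{s}$ (which are $(n-1)$-equivalences since $X\in\catwg{n}$), observe that their targets lie in $\cathd{n-1}$ by \cite{Pa1}, and apply Proposition~\ref{pro-nequiv-to-obj} to conclude $X_s\in\cathd{n-1}$ for all $s$. The paper's version is slightly terser (it does not spell out the $s=0,1$ cases or the verification of conditions (i) and (ii) separately), but the substance is identical.
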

\begin{proof}
Since $X\in\catwg{n}$, the induced Segal maps
\begin{equation*}
    \hmu{s}: X_s\rw \pro{X_1}{X^d_0}{s}
\end{equation*}
are $\equ{n-1}$s for all $s\geq 2$. Since by hypothesis $X_1\in\cathd{n-1}$ and $X_0^d$ is discrete, by (see \cite{Pa1}, Lemma 3.8 a)),
 \begin{equation*}
    \pro{X_1}{X^d_0}{s}\in\cathd{n-1}\;.
\end{equation*}
By Proposition \ref{pro-nequiv-to-obj} applied to $\hmu{s}$ we conclude that $X_s\in\cathd{n-1}$  for all $s\geq 2$. Therefore $X\in\catwg{n}$ is such that $X_s\in \cathd{n-1}$ and $p^{(n)}X\in\cathd{n-1}$. By definition then $X\in\cathd{n}$.
\end{proof}
\begin{corollary}\label{cor2-crit-nequiv-rel}
    Let $X\in\catwg{n}$, then $X\in\cathd{n}$ if and only if there is an $n$-equivalence $\zg:X\rw Y$ with $Y$ discrete.
\end{corollary}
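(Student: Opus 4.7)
The plan is to prove the two implications separately: the reverse direction will follow essentially directly from Proposition~\ref{pro-nequiv-to-obj}, while the forward direction will exhibit the canonical discretization map $\zgb{n}_X : X \to X^d$ as the required $n$-equivalence.

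For the forward direction, I would take $Y := X^d = \di{n}\di{n-1}\cdots\di{1}\p{1}\cdots\p{n}X$ from Definition~\ref{def-hom-dis-ncat-1}, which is discrete by construction, and $\zg := \zgb{n}_X$. The only nontrivial point is to verify that this canonical map is an $n$-equivalence in $\cathd{n}$ (hence in $\catwg{n}$ by Remark~\ref{rem-n-equiv}). My approach would be to invoke Proposition~\ref{pro-Pa1}(c), which characterizes $n$-equivalences in $\cathd{n}$ as maps $f : X \to Y$ such that $X^d \cong Y^d$. Applied with $Y = X^d$, this reduces to checking $X^d \cong (X^d)^d$, which holds because on any already discrete \nfol category the functors $\p{k}$ and $\di{k}$ preserve the underlying set, so the whole discretization composite acts as the identity on $X^d$.

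For the reverse direction, suppose $\zg : X \to Y$ is an $n$-equivalence in $\catwg{n}$ with $Y$ discrete. I would first observe that any discrete \nfol category $Y$ automatically lies in $\cathd{n}$: the conditions of Definition~\ref{def-hom-dis-ncat} all hold vacuously on a constant multinerve (the Segal isomorphisms and the existence of the truncation functor $\p{n}$ are trivial, and the $n^{\text{th}}$ direction is groupoidal in the strongest possible sense). Applying Proposition~\ref{pro-nequiv-to-obj} to $\zg$ then gives $X \in \cathd{n}$ immediately.

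The only real difficulty lies in the forward direction, namely in distinguishing between the $(n-1)$-equivalence statement that Proposition~\ref{pro-Pa1}(a) supplies for $\zgb{n}_X$ and the stronger $n$-equivalence claim needed here. The cleanest bridge between the two is Proposition~\ref{pro-Pa1}(c), exploiting the idempotence of discretization on objects that are already discrete; an alternative induction on $n$ using the fiber description of $n$-equivalences from Definition~\ref{def-hom-dis-ncat-1} would work but is already subsumed by that result.
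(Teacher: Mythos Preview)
Your proof is correct and matches the paper's approach: both use $\zg_{(n)}:X\rw X^d$ for the forward direction and Proposition~\ref{pro-nequiv-to-obj} (after observing that discrete implies homotopically discrete) for the converse. The only minor variation is that the paper cites \cite[Lemma~3.8(a)]{Pa1} directly for the fact that $\zg_{(n)}$ is an $n$-equivalence, whereas you recover this from Proposition~\ref{pro-Pa1}(c) via the idempotence $(X^d)^d\cong X^d$---a perfectly valid and slightly more self-contained route.
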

\begin{proof}
If $X\in\cathd{n}$ then by Lemma \cite[Lemma 3.8 a]{Pa1}, $\zg\lo{n}:X\rw X^d$ is an $n$-equivalence. Conversely, suppose that there is an $n$-equivalence $\zg:X\rw Y$ with $Y$ discrete, then in particular $Y\in\cathd{n}$ so, by Proposition \ref{pro-nequiv-to-obj}, $X\in\cathd{n}$.
\end{proof}

\begin{definition}\label{def-kdir-wg-ncat}
Given $X\in\cat{n}$ and $k\geq 0$, let $\Nu{2}X\in\funcat{}{\cat{n-1}}$ as in Definition \ref{def-ner-func-dirk} so that for each $k\geq 0$, $(\Nu{2}X)_k=X_k\up{2}\in \funcat{}{\cat{n-2}}$ is given by
    \begin{equation*}
        (X_k\up{2})_s =
        \left\{
          \begin{array}{ll}
            X_{0k}, & \hbox{$s=0$;} \\
            X_{1k}, & \hbox{$s=1$;} \\
            X_{sk}=\pro{X_{1k}}{X_{0k}}{s}, & \hbox{$s\geq 2$.}
          \end{array}
        \right.
    \end{equation*}
\end{definition}
We denote by $\Nb{n}\catwg{n}$ the image of the multinerve functor $\Nb{n}:\catwg{n}\rw \funcat{n-1}{\Cat}$. Note that, since $\Nb{n}$ is fully faithful, we have an isomorphism $\catwg{n}\cong \Nb{n}\catwg{n}$.

The following lemmas are needed in the initial steps in the induction in the proof of Proposition \ref{pro-crit-ncat-be-wg}.

\begin{lemma}\label{lem-crit-doucat-wg}
    Let $X\in\cat{2}$ be such that
    \begin{itemize}
      \item [i)] $X_0\in\cathd{}$,\mk

      \item [ii)] $\bar p J_2 X\in N\Cat$.\mk
    \end{itemize}
    Then $X\in \catwg{2}$.
\end{lemma}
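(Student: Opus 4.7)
The plan is to verify the four defining conditions (a)--(d) of Definition \ref{def-n-equiv} for $n=2$ for the given $X$. Condition (a) is exactly hypothesis (i). Condition (b) (Segal condition) holds automatically, since $X\in\cat{2}$ is an internal category in $\Cat$, so by Proposition \ref{pro-ner-int-cat} the Segal maps of $J_2 X$ are isomorphisms. Condition (d) is immediate from hypothesis (ii): since $\bar p J_2 X$ is the nerve of some category $\p{2} X$, this defines a truncation functor $\p{2}:\catwg{2}\rw\Cat$ making the required diagram commute.

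The bulk of the argument lies in verifying the induced Segal condition (c): for each $k\geq 2$, that
\[
\hmuk: X_k\cong\pro{X_1}{X_0}{k}\rw\pro{X_1}{X_0^d}{k}
\]
is an equivalence of categories. I will check fully faithfulness and essential surjectivity separately. For fully faithfulness I exploit crucially that $X_0\in\cathd{}$ is an equivalence relation, so any two parallel arrows in $X_0$ coincide. Unwinding the fiber product shows morphisms in $\pro{X_1}{X_0}{k}$ from $(f_1,\ldots,f_k)$ to $(f_1',\ldots,f_k')$ are tuples $(\phi_i:f_i\rw f_i')$ in $X_1$ with no extra constraint, since the compatibility $d_0\phi_i=d_1\phi_{i+1}$ in $X_0$ is automatic for any two arrows with matching source and target. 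Analogously, since $X_0^d$ is discrete, morphisms in $\pro{X_1}{X_0^d}{k}$ between objects in the image of $\hmuk$ are likewise unconstrained tuples, so $\hmuk$ induces bijections on Hom-sets.

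For essential surjectivity I apply $p:\Cat\rw\Set$ to $\hmuk$. By Lemma \ref{lem-p-pres-fib-pro}, $p$ preserves fiber products over the discrete object $X_0^d$, and combined with the Segal condition on $X$ this gives that $p\hmuk$ factors as a map $pX_k\rw\pro{pX_1}{X_0^d}{k}$ which, by naturality of the construction, coincides with the $k$-th Segal map of the simplicial set $\bar p J_2 X$. Hypothesis (ii) and Proposition \ref{pro-ner-int-cat} guarantee this Segal map is an isomorphism, hence $p\hmuk$ is a bijection and $\hmuk$ is essentially surjective. The key insight driving the whole argument is that $X_0$ being an equivalence relation forces the morphism data in both fiber products to coincide, so the only potential obstruction to $\hmuk$ being an equivalence lies in its action on objects, and hypothesis (ii) supplies exactly what is needed to eliminate that obstruction. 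I do not anticipate serious difficulty in this base case; together with an analogous statement about the nerve in direction $2$ it will feed into the inductive step of Proposition \ref{pro-crit-ncat-be-wg}, where the real work takes place.
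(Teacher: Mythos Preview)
Your proof is correct and follows essentially the same approach as the paper: both establish the induced Segal condition by showing $\hmuk$ is fully faithful (using that $X_0\in\cathd{}$ forces the compatibility constraints on morphisms to be vacuous, which the paper phrases as $X_0(\pt_0 a,\pt_0 c)$ being a one-element set) and essentially surjective (by applying $p$ and invoking hypothesis (ii) together with Lemma \ref{lem-p-pres-fib-pro}). The only cosmetic difference is that the paper writes out the chain of isomorphisms $p(\tens{X_1}{X_0})\cong pX_2\cong\tens{pX_1}{pX_0}=\tens{pX_1}{X_0^d}=p(\tens{X_1}{X_0^d})$ explicitly, whereas you identify $p\hmuk$ directly with the Segal map of $\bar p J_2 X$; these are the same computation.
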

\begin{proof}
Since $X_0\in \cathd{}$, $pX_0 = X_0^d$. By hypothesis, $p X_2\cong \tens{p X_1}{p X_0}$ and $X_2\cong \tens{X_1}{X_0}$. Using the fact that $p$ commutes with pullbacks over discrete objects, we obtain
\begin{equation*}
    p(\tens{X_1}{X_0})\cong pX_2\cong \tens{p X_1}{p X_0}=\tens{p X_1}{p X^d_0}=p(\tens{X_1}{X^d_0})\;.
\end{equation*}
This shows that the map
\begin{equation*}
    \hmu2:\tens{X_1}{X_0} \rw \tens{X_1}{X^d_0}
\end{equation*}
is essentially surjective on objects. On the other hand, this map is also fully faithful. In fact, given $(a,b),(c,d)\in \tens{X_{10}}{X_{00}}$, we have
\begin{equation*}
\begin{split}
    & (\tens{X_1}{X_0})((a,b),(c,d))\cong X_1(a,c)\tiund{X_0(\pt_0 a,\pt_0 c)} X_1(b,d)\cong \\
    & \cong X_1(a,c)\times X_1(b,d) \cong (\tens{X_1}{X^d_0})(\hmu{2}(a,b),\hmu{2}(c,d))
\end{split}
\end{equation*}
where we used the fact that $X_0(\pt_0 a,\pt_0 c)$ is the one-element set, since $X_0\in\cathd{}$. We conclude that $\hmu{2}$ is an equivalence of categories.

Similarly one shows that for all $k\geq 2$
\begin{equation*}
    \hmuk : \pro{X_1}{X_0}{k} \rw \pro{X_1}{X^d_0}{k}
\end{equation*}
is an equivalence of categories. By definition, this means that $X\in\catwg{2}$.
\end{proof}
\begin{lemma}\label{lem-crit-3cat-wg}\
\begin{itemize}
  \item [a)] The functor $\Nu{2}:\cat{3}\rw \funcat{}{\cat{2}}$ restricts to $\Nu{2}:\catwg{3}\rw \funcat{}{\catwg{2}}$.\mk

  \item [b)] Let $X\in\cat{3}$ be such that
  \begin{itemize}
    \item [i)] $X_0\in\cathd{2}$, $X_{s0}\in\cathd{}$.

    \item [ii)] $\bar p J_3 X\in \Nb{2}\catwg{2}$.
  \end{itemize}
  \mk

 \nid Then $X\in\catwg{3}$.
\end{itemize}

\end{lemma}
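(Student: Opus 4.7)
The plan is to deduce both parts from the characterization of weakly globular $2$-fold categories given by Lemma \ref{lem-crit-doucat-wg}, together with the definition of $\catwg{3}$. For part (b), I will use the identification $\p{3}X = Y \in \catwg{2}$ obtained from the hypothesis $\bar p J_3 X = \Nb{2}Y$.

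For part (a), given $X \in \catwg{3}$ and $k \geq 0$, I would apply Lemma \ref{lem-crit-doucat-wg} to the double category $X_k\up{2}$. Its two hypotheses reduce to: (i) $(X_k\up{2})_0 = X_{0,k} \in \cathd{}$, which follows from $X_0 \in \cathd{2}$ together with the restriction $J_2: \cathd{2} \rw \funcat{}{\cathd{}}$ of Proposition \ref{pro-Pa1}(d); and (ii) $\bar p J_2 X_k\up{2} \in N\Cat$, which amounts to showing $pX_{s,k} \cong \pro{pX_{1,k}}{pX_{0,k}}{s}$ for $s \geq 2$. The latter follows from evaluating at simplicial degree $k$ the Segal isomorphism $(\p{3}X)_s \cong \pro{(\p{3}X)_1}{(\p{3}X)_0}{s}$ in $\Cat$, which holds because $\p{3}X \in \catwg{2}$ and the nerve commutes with the relevant fiber products.

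For part (b), I would first verify that each $X_s \in \catwg{2}$ by the same lemma: the weak globularity $X_{s,0} \in \cathd{}$ is hypothesis (i), and $\bar p J_2 X_s \in N\Cat$ follows from identifying the simplicial set $k \mapsto pX_{s,k}$ with the nerve of the category $Y_s$, via $\bar p J_3 X = \Nb{2}Y$. This shows $X \in \funcat{}{\catwg{2}}$. The remaining conditions of Definition \ref{def-n-equiv} are then immediate: the weak globularity $X_0 \in \cathd{2}$ is the hypothesis, the Segal isomorphisms $X_s \cong \pro{X_1}{X_0}{s}$ hold because $X \in \cat{3}$ is an internal category in $\cat{2}$ (Proposition \ref{pro-ner-int-cat}), and the truncation functor is realized by $\p{3}X := Y$ via the commuting diagram in Definition \ref{def-n-equiv}(d).

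The main obstacle is establishing the induced Segal condition, namely that for $s \geq 2$ the map $\hmu{s}: X_s \rw \pro{X_1}{X_0^d}{s}$ is a $2$-equivalence in $\catwg{2}$. First one checks that $\pro{X_1}{X_0^d}{s} \in \catwg{2}$ using $X_1 \in \catwg{2}$, the discreteness of $X_0^d$, and the closure of $\cathd{}$ under fiber products over discrete objects (\cite[Lemma 3.8]{Pa1}). The $2$-equivalence is then verified on the two ingredients of Definition \ref{def-n-equiv}: under the identifications $\p{2}X_s = Y_s$ and $\p{2}(\pro{X_1}{X_0^d}{s}) = \pro{Y_1}{Y_0^d}{s}$ (which use Lemma \ref{lem-p-pres-fib-pro} and the fact that $X_0^d$ and $Y_0^d$ have the same underlying set), the functor $\p{2}\hmu{s}$ coincides with the induced Segal map of $Y \in \catwg{2}$, hence is an equivalence of categories by assumption. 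For the hom-categories $\hmu{s}(a,b)$ with $a,b \in X_s^d$, one decomposes $X_s(a,b)$ as an iterated fiber product of the $X_1(a_i,b_i)$ over the hom categories $X_0(\bar a_i, \bar b_i)$, and exploits the fact that $X_0 \in \cathd{2}$ forces each $X_0(\bar a, \bar b)$ to be homotopically discrete, so that replacing the fiber constraints over $X_0$ by fiber constraints over $X_0^d$ changes the result only up to equivalence of categories. This is the same trivialization phenomenon that drives the proof of Lemma \ref{lem-crit-doucat-wg}, iterated $s$ times; the careful bookkeeping around iterated fiber products and the matching of basepoints between $X_0^d$ and $(\p{3}X)_0^d$ will be the delicate part.
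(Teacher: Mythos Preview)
Your argument for part (a) and for the structural parts of (b)---that each $X_s\in\catwg{2}$, that $\p{3}X\in\catwg{2}$, and that $\p{2}\hmu_s$ coincides with the induced Segal map of $\p{3}X$---is correct and matches the paper.

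The gap is in the local part of the induced Segal condition. You propose to deduce the equivalence
\[
X_1(a,c)\tiund{X_0(\bar a,\bar c)}X_1(b,d)\;\simeq\;X_1(a,c)\times X_1(b,d)
\]
solely from the fact that $X_0(\bar a,\bar c)\in\cathd{}$ is equivalent to a point. This does not suffice: a strict pullback over a contractible category need not be equivalent to the product. Concretely, take $X_0(\bar a,\bar c)$ to be the walking isomorphism $\{0\cong 1\}$ and let $X_1(a,c)$, $X_1(b,d)$ each be the terminal category, mapping to $0$ and to $1$ respectively; the pullback is empty while the product is a point. Your reasoning does establish full faithfulness (the hom-\emph{sets} of the equivalence relation $X_0(\bar a,\bar c)$ are singletons), but essential surjectivity requires more. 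In Lemma~\ref{lem-crit-doucat-wg} this issue is absent because there $X_0(\pt_0 a,\pt_0 c)$ is literally a one-element \emph{set}; the ``trivialization phenomenon'' does not iterate in the way you suggest once the hom-object is only contractible rather than a point.

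The paper closes this gap by feeding part (a) back into the proof of (b). Since $X_0\in\cathd{2}$ and $\p{3}X\in\catwg{2}$ are already established, the argument for (a) applies verbatim and gives $(\Nu{2}X)_1\in\catwg{2}$. Its induced Segal map
\[
\tens{X_{11}}{X_{01}}\ \longrightarrow\ \tens{X_{11}}{X_{01}^d}=\tens{X_{11}}{(\di{2}\p{2}X_0)_1}
\]
is therefore an equivalence of categories; this is exactly where hypothesis (ii) enters, supplying the essential-surjectivity half via Lemma~\ref{lem-crit-doucat-wg}. Restricting this level-$1$ equivalence to the fibres over $(a,b),(c,d)$, and then using that $\p{2}X_0\in\cathd{}$ has singleton hom-sets, yields the required local equivalences. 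The missing idea in your outline is thus to invoke part (a) in direction $2$; without it there is no mechanism for essential surjectivity of $\hmu_s(a,b)$.
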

\begin{proof}\

\nid a) Since $X\in\catwg{3}$, $X_0\in\cathd{2}$ therefore for each $k\geq 0$
\begin{equation*}
    (\Nu{2}X)_{k0}=X_{0k}\in\cathd{}\;.
\end{equation*}
Further, since $\p{3}X\in\catwg{2}$, for each $k\geq 0$
\begin{equation*}
    \ovl{p}J_2(\Nu{2}X)_k=(\p{3}X)_k\up{2}
\end{equation*}
is the nerve of a category. It follows from Lemma \ref{lem-crit-doucat-wg} applied to $(\Nu{2}X)_k$ that $(\Nu{2}X)_k\in\catwg{2}$.

\bk

\nid b) By hypothesis, $X_s\in\cat{2}$ is such that $X_{s0}\in\cathd{}$ and $\bar p J_2 X_s$ is the nerve of a category. Thus by  Lemma \ref{lem-crit-doucat-wg}, $X_s\in\catwg{2}$.

Also by hypothesis $\p{3}X\in\catwg{2}$. To show that $X\in\catwg{3}$ it remains to prove that the map
\begin{equation*}
    \hmu{s}:\pro{X_1}{X_0}{s}\rw \pro{X_1}{X^d_0}{s}
\end{equation*}
is a 2-equivalence. We first show this for $s=2$, the case $s>2$ being similar. We first show that it is a local equivalence. By part a) $(\Nu{2}X)_1\in \catwg{2}$. Thus there is an equivalence of categories
\begin{equation}\label{eq1-lem-crit-3cat}
    \tens{X_{11}}{X_{01}}\rw \tens{X_{11}}{X^d_{01}}=\tens{X_{11}}{(\di{2}\p{2}X_0)_1}\;.
\end{equation}
From hypothesis ii) by Remark \ref{rem-eq-def-wg-ncat} using the fact that $p\p{2}X_{s0}=X_{so}^d$ we have
\begin{equation*}
   X_{20}^{d}=(\tens{X_{10}}{X_{00}})^d \cong \tens{X^d_{10}}{X^d_{00}}\;.
\end{equation*}
Let $(a,b),(c,d)\in \tens{X^d_{10}}{X^d_{00}}$\;. By \eqref{eq1-lem-crit-3cat} there is an equivalence of categories
\begin{equation}\label{eq2-lem-crit-3cat}
\begin{split}
    & (\tens{X_1}{X_0})((a,b),(c,d)) = \\
    & = X_1(a,c) \tiund{X_0(\pt_0 a, \pt_0 c)} X_1 (b,d) \rw X_1(a,c) \tiund{(\di{2}\p{2}X_0)(\tilde\pt_0 a, \tilde\pt_0 c)} X_1 (b,d)
\end{split}
\end{equation}
On the other hand, since $\p{2}X_0 \in \cathd{}$, $\p{2}X_0(\tilde\pt_0 a, \tilde\pt_0 c)$ is the one-element set. Therefore
\begin{equation}\label{eq2A-lem-crit-3cat}
\begin{split}
    &  X_1(a,c) \tiund{(\di{2}\p{2}X_0)(\tilde\pt_0 a, \tilde\pt_0 c)} X_1 (b,d) = \\
    & = X_1(a,c)\times X_1(b,d)=(\tens{X_1}{X^d_0})((a,b),(c,d))\;.
\end{split}
\end{equation}
From \eqref{eq2-lem-crit-3cat} and \eqref{eq2A-lem-crit-3cat} we conclude that $\hmu{2}$ is a local equivalence. Further, by hypothesis ii), there is an equivalence of categories
\begin{equation*}
\begin{split}
    & \p{2}\hmu{2}:\p{2}(\tens{X_1}{X_0})= \tens{\p{2} X_1}{\p{2} X_0} \xrw{\sim} \\
    & \rw \tens{\p{2} X_1}{(\p{2} X_0)^d}=\p{2}(\tens{X_1}{X^d_0})\;.
\end{split}
\end{equation*}
In conclusion, $\hmu{2}$ is a 2-equivalence, as required.
\end{proof}
\begin{proposition}\label{pro-crit-ncat-be-wg}\

    \begin{itemize}
      \item [a)] The functor $\Nu{2}:\cat{n}\rw\funcat{}{\cat{n-1}}$ restricts to a functor $\Nu{2}:\catwg{n}\rw \funcat{}{\catwg{n-1}}$.\mk

      \item [b)] Let $X\in\cat{n}$ be such that\mk
  \begin{itemize}
    \item [i)] $X_0\in\cathd{n-1}$, $X_{s0}\in\cathd{n-2}$.\mk

    \item [ii)] $\bar p J_n X\in \Nb{n-1}\catwg{n-1}$ for all $s \geq 0$.\mk
  \end{itemize}
  \mk
    \end{itemize}
    Then $X\in\catwg{n}$.
\end{proposition}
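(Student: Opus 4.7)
The proof proceeds by induction on $n$, establishing (a) and (b) simultaneously. The base cases $n=2$ and $n=3$ are supplied by Lemmas \ref{lem-crit-doucat-wg} and \ref{lem-crit-3cat-wg} respectively. In each inductive step I would prove (a) at level $n$ first and then invoke it inside the proof of (b) at level $n$; this mirrors the pattern of Lemma \ref{lem-crit-3cat-wg}, where part (a) is used inside the proof of part (b).

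For (a) at level $n$, given $X\in\catwg{n}$ and $k\geq 0$, I reduce $(\Nu{2}X)_k\in\catwg{n-1}$ to the inductive form of (b) applied to $Y=(\Nu{2}X)_k\in\cat{n-1}$. The hypothesis $Y_0=X_{0,k}\in\cathd{n-2}$ is immediate from $X_0\in\cathd{n-1}$ together with Proposition \ref{pro-Pa1}(d), which gives the closure of $\cathd{n-1}$ under passage to simplicial components. The hypothesis $Y_{s,0}\in\cathd{n-3}$ is obtained by applying inductive (a) to the slice $X_s\in\catwg{n-1}$, which delivers a $\catwg{n-2}$-object in the appropriate direction whose weak globularity forces the required $0$-face into $\cathd{n-3}$. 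The third hypothesis $\bar p J_{n-1}Y\in\Nb{n-2}\catwg{n-2}$ is obtained from the truncation $\p{n}X\in\catwg{n-1}$ via inductive (a), producing $(\Nu{2}\p{n}X)_k\in\catwg{n-2}$ whose multinerve is exactly $\bar p J_{n-1}Y$.

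For (b) at level $n$, the four defining conditions of $\catwg{n}$ must be checked. Weak globularity is given by hypothesis (i); the Segal condition is automatic from $X\in\cat{n}$; and the truncation functor condition is extracted directly from hypothesis (ii), which provides $\p{n}X\in\catwg{n-1}$ with $\Nb{n-1}\p{n}X=\bar p J_n X$. The substantive tasks are to check that each $X_s\in\catwg{n-1}$ and that each induced Segal map $\hmu{k}\colon X_k\rw\pro{X_1}{X_0^d}{k}$ is an $(n-1)$-equivalence. For the former, apply inductive (b) to $X_s$, verifying its hypotheses from the given homotopical discreteness together with the Segal decomposition $X_s\cong\pro{X_1}{X_0}{s}$ for $s\geq 2$ and the truncation $\p{n-1}X_s=(\p{n}X)_s\in\catwg{n-2}$. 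For the latter, I follow the proof of Lemma \ref{lem-crit-3cat-wg}(b): the map $\p{n-1}\hmu{k}$ coincides with the induced Segal map of $\p{n}X\in\catwg{n-1}$, hence is an $(n-2)$-equivalence by the induced Segal condition on $\p{n}X$; and the hom-wise equivalence $\hmu{k}(a,b)$ follows from part (a) at level $n$ (just proved) applied to $(\Nu{2}X)_1\in\catwg{n-1}$, whose own induced Segal condition supplies the required $(n-2)$-equivalence after collapsing pullbacks over $X_0$-homs into plain products via the homotopical discreteness of $X_0$, exactly as in equations \eqref{eq1-lem-crit-3cat}--\eqref{eq2A-lem-crit-3cat}.

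The main obstacle is the multi-directional index bookkeeping: throughout both (a) and (b) one must carefully track which face of which slice lands in which $\cathd{m}$, chaining together the given homotopical discreteness of $X_0$ and $X_{s,0}$, the closure properties in Proposition \ref{pro-Pa1}(d), the Segal decomposition of $X_s$, and inductive (a) applied to appropriate intermediate layers. The $n=3$ proof in Lemma \ref{lem-crit-3cat-wg} provides the conceptual template; the general inductive step is a disciplined elaboration of that argument, the only genuinely new ingredient being the interplay between (a) and (b) across a single level of the induction.
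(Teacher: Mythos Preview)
Your outline matches the paper's proof closely: simultaneous induction with base cases handled by Lemmas \ref{lem-crit-doucat-wg} and \ref{lem-crit-3cat-wg}, part (a) established by verifying that $(\Nu{2}X)_k$ satisfies inductive (b), and part (b) established by first showing each $X_s\in\catwg{n-1}$ via inductive (b), then extracting $\p{n}X$ from hypothesis (ii), and finally checking the induced Segal condition by working with $X_k\up{2}$.

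There is one logical slip. In your treatment of the induced Segal maps in (b) you write that the hom-wise equivalence ``follows from part (a) at level $n$ (just proved) applied to $(\Nu{2}X)_1$''. But part (a) at level $n$ takes as input an object of $\catwg{n}$, and at this point of the argument $X$ is only known to lie in $\cat{n}$ with hypotheses (i) and (ii); that $X\in\catwg{n}$ is precisely what you are proving. So the invocation is circular as stated. The paper avoids this by not appealing to the statement of (a) at level $n$, but instead re-verifying directly that $X_k\up{2}\in\cat{n-1}$ satisfies the hypotheses of inductive (b): $(X_k\up{2})_0=X_{0k}\in\cathd{n-2}$ from hypothesis (i), $(X_k\up{2})_{s0}=X_{sk0}\in\cathd{n-3}$ from the already-established $X_s\in\catwg{n-1}$, and $\bar p J_{n-1}X_k\up{2}=\Nb{n-2}(\p{n}X)_k\up{2}\in\Nb{n-2}\catwg{n-2}$ using inductive (a) applied to $\p{n}X\in\catwg{n-1}$. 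This yields $X_k\up{2}\in\catwg{n-1}$, after which your argument proceeds exactly as you describe. The fix amounts to replacing ``apply (a) at level $n$ to $X$'' with ``re-run the argument of (a) using the ingredients already in hand''; the paper does this explicitly.

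A minor aside: in your proof of (a), the verification that $(X_k\up{2})_{s0}=X_{sk0}\in\cathd{n-3}$ does not need inductive (a) applied to $X_s$; it follows directly from unpacking the definition $X_s\in\catwg{n-1}\Rightarrow X_{sk}\in\catwg{n-2}\Rightarrow X_{sk0}\in\cathd{n-3}$, which is how the paper argues.
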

\begin{proof}
By induction on $n$. For $n=2,3$ see Lemmas \ref{lem-crit-doucat-wg} and \ref{lem-crit-3cat-wg}. Suppose, inductively, that it holds for $(n-1)$.
\bk

a) Clearly $X_k\up{2}\in\cat{n-1}$; we show that $X_k\up{2}$ satisfies the inductive hypothesis b) and thus conclude that $X_k\up{2}\in\catwg{n-1}$.

We have $(X_k\up{2})_0=X_{0k}\in\cathd{n-2}$ since $X_0\in\cathd{n-1}$ (as $X\in\catwg{n}$). Further,
\begin{equation*}
    (X_k\up{2})_{s0}=X_{sk0}\in \cathd{n-3}
\end{equation*}
since $X_{sk}\in\catwg{n-2}$ (as $X_s\in \catwg{n-1}$ because $X\in\catwg{n}$). Thus condition i) in the inductive hypothesis b)  holds for $X_{k}\up{2}$. To show that condition ii) holds, notice that
\begin{equation}\label{eq1-lem-xk-catwg}
    \bar p J_{n-1}X_k\up{2}=\Nb{n-1}(\p{n}X)_k\up{2}
\end{equation}
In fact, for all $(r_1,...,r_{n-2})\in {{\zD}^{n-2}}^{op}$,
\begin{align*}
    &(\bar p J_{n-1}X_k\up{2})_{r_1...r_{n-2}}=p(X_k\up{2})_{r_1...r_{n-2}}=\\
    =\ & pX_{r_1,k,r_2...r_{n-2}}=(\bar p J_{n-2}X_{r,k})_{r_2...r_{n-2}}=\\
    =\ & (\Nb{n-2}\p{n-2}X_{r,k})_{r_2...r_{n-2}}=\\
    =\ & (\Nb{n-2}((\p{n}X)\up{2}_k)_{r_1})_{r_2...r_{n-2}}=\\
    =\ & (\Nb{n-1}(\p{n}X)\up{2}_k)_{r_1...r_{n-2}}\;.
\end{align*}
Since this holds for all $r_1,...,r_{n-2}$, \eqref{eq1-lem-xk-catwg} follows

By induction hypothesis a) applied to $\p{n}X$, $(\p{n}X)_k\up{2}\in\catwg{n-2}$. Therefore \eqref{eq1-lem-xk-catwg} means that $X_k\up{2}\in\cat{n-1}$ satisfies condition ii) in the inductive hypothesis b). Thus we conclude that $X_k\up{2}\in\catwg{n-1}$ proving a).
\bk

b) Suppose, inductively, that the statement holds for $n-1$ and let $X$ be as in the hypothesis. For each $s\geq 0$ consider $X_s\in\cat{n-1}$. By hypothesis, $X_{s0}\in\cathd{n-2}$ and
\begin{equation*}
    \bar p J_{n-1}X_s =(\bar p J_n X)_s \in \Nb{n-2}\catwg{n-2}
\end{equation*}
since $\bar p J_n X\in \Nb{n-1}\catwg{n-1}$. Thus $X_s$ satisfies the induction hypothesis and we conclude that $X_s\in\catwg{n-1}$. Further, for each $k_1,\ldots,k_{n-2}$ we have
\begin{equation*}
\begin{split}
    & (\Nb{n-1}\ovl{\p{n-1}}X)_{k_1...k_{n-2}} = (\Nb{n-2}\p{n-1}X_{k_1})_{k_2...k_{n-2}}=\\
    =\;& (\bar p J_{n-1}X_{k_1})_{k_2...k_{n-2}}=p X_{k_1...k_{n-2}}=(\bar p J_n X)_{k_1...k_{n-2}}\;.
\end{split}
\end{equation*}
Since, by hypothesis, $\bar p J_n X\in \Nb{n-1}\catwg{n-1}$, we conclude that $\ovl{\p{n-1}}X\in\catwg{n-1}$. We can therefore define $\p{n}X = \ovl{\p{n-1}}X\in\catwg{n-1}$.

To prove that $X\in\catwg{n}$ it remains to prove that the induced Segal maps
\begin{equation*}
    \hmu{s}:\pro{X_1}{X_0}{s}\rw \pro{X_1}{X_0^d}{s}
\end{equation*}
are $\equ{n-1}$s for all $s\geq 2$. We prove this for $s=2$ the case $s>2$ being similar. We claim that $X_k\up{2}\in\cat{n-1}$ satisfies the inductive hypothesis b). In fact, $(X_k\up{2})_0 = X_{0k}\in \cathd{n-2}$ since $X_0\in\cathd{n-1}$; for each $s\geq 0$, $(X_k\up{2})_{s0}=X_{sk0}\in\cathd{n-3}$ since, from above, $X_s\in\catwg{n-1}$.

Also, from a) and the fact that, by hypothesis, $\bar p J_n X\in\Nb{n-1}\catwg{n-1}$, we conclude that
\begin{equation*}
    \bar p J_{n-1} X_k\up{2} \in \Nb{n-2}\catwg{n-2} \;.
\end{equation*}
Thus $X_k\up{2}$ satisfies the inductive hypothesis b) and we conclude that $X_k\up{2}\in\catwg{n-1}$. It follows that the induced Segal map
\begin{equation}\label{eq3-crit-ncat-be-wg}
    \tens{X_{1k}}{X_{0k}}\rw \tens{X_{1k}}{X^d_{0k}}=\tens{X_{1k}}{(\p{2,n-1}X_0)_k}
\end{equation}
is a $\equ{n-2}$. Since $\p{n}X\in\catwg{n-1}$, using Remark \ref{rem-eq-def-wg-ncat} and the fact that $(\p{n}X)_{s0}^d = (\p{n-2}\p{n-1}X_{s0})^d = X_{s0}^d$ we obtain
\begin{equation*}
    (\tens{X_{10}}{X_{00}})^d=\tens{X^d_{10}}{X^d_{00}}\;.
\end{equation*}
Let $(a,b),(c,d)\in(\tens{X_{10}}{X_{00}})^d=\tens{X^d_{10}}{X^d_{00}}$. By \eqref{eq3-crit-ncat-be-wg} there is a $(n-2)$-equivalence
\begin{equation}\label{eq3A-crit-ncat-be-wg}
\begin{split}
    & (\tens{X_1}{X_0})((a,b),(c,d))=X_1(a,c)\tiund{X_0(\pt_0 a,\pt_0 c)}X_1(b,d)\rw \\
    & \rw X_1(a,c)\tiund{(\p{2,n-1}X_0)(\tilde\pt_0 a,\tilde\pt_0 c)}X_1(b,d)\;.
\end{split}
\end{equation}
On the other hand, $\p{2,n-1}X_0\in\cathd{}$ is an equivalence relation, therefore
\begin{equation*}
    \p{2,n-1}X_0(\tilde\pt_0 a,\tilde\pt_0 c)
\end{equation*}
is the one-element set. It follows that
\bk
\begin{equation}\label{eq3B-crit-ncat-be-wg}
\begin{split}
    & X_1(a,c)\tiund{(\p{2,n-1}X_0)(\tilde\pt_0 a,\tilde\pt_0 c)}X_1(b,d)\cong\\
    & \cong X_1(a,c)\times X_1(b,d)\cong (\tens{X_1}{X_0^d})((a,b),(c,d))\;.
\end{split}
\end{equation}
Thus \eqref{eq3A-crit-ncat-be-wg}  and \eqref{eq3B-crit-ncat-be-wg} imply that $\hmu{2}$ is a local $\equ{n-2}$.

To show that $\hmu{2}$ is a $\equ{n-1}$ it remains to prove that $\p{n-1}\hmu{2}$ is a $\equ{n-2}$. Since from above, $\p{n}X=\ovl{\p{n-1}}X\in\catwg{n-1}$, we have
\begin{equation*}
\begin{split}
    & \p{n-1}\hmu{2}:\p{n-1}(\tens{X_1}{X_0})\cong  \tens{\p{n-1}X_1}{\p{n-1}X_0}\rw\\
    & \rw \tens{\p{n-1}X_1}{(\p{n-1}X_0)^d}=\p{n-1}(\tens{X_1}{X^d_0})\;.
\end{split}
\end{equation*}
is a $\equ{n-2}$, as required.
\end{proof}
\section{Segalic pseudo-functors and their strictification}\label{sec-pseud-strict}
In this section we introduce the category $\segpsc{n-1}{\Cat}$ of Segalic pseudo-functors and we prove in Theorem \ref{the-strict-funct} that the strictification of a Segalic pseudo-functor is a weakly globular \nfol category.

After giving the definition of Segalic pseudo-functors in \ref{def-seg-ps-fun}, we show in Proposition \ref{pro-transf-wg-struc} that if an \nfol category, viewed as a diagram in $\funcat{n-1}{\Cat}$ is levelwise equivalent to a Segalic pseudo-functor, then it is weakly globular. We show there that the strictification machinery, when applied to a Segalic pseudo-functor, produces an \nfol category  satisfying  the hypotheses of Proposition \ref{pro-transf-wg-struc} (when viewed as a diagram in $\funcat{n-1}{\Cat}$). We conclude that its strictification is a weakly globular \nfol category.\bk

\subsection{The idea of Segalic pseudo-functors}\label{sbs-seg-map-pse-fun}
The category of Segalic pseudo-functors is a full subcategory of the category $\psc{n-1}{\Cat}$ of pseudo-functors and pseudo-natural transformations \cite{Borc}.

A topological intuition about an object of $\psc{n-1}{\Cat}$ is that it consists of categories $X_{\uk}$ for each object $\uk$ of $\dop{n-1}$ together with multi-simplicial face and degeneracy maps satisfying the multi-simplicial identities not as equalities but as isomorphisms, and these isomorphisms satisfy coherence axioms. Guided by this intuition, we generalize to certain pseudo-functors the multi-simpicial notion of Segal map.

For this purpose, consider a  functor $H\in\funcat{n-1}{\Cat}$. For each $\uk=(k_1,...,k_{n-1})\in\dop{n-1}$ and $1\leq i\leq n-1$ we have
\begin{equation*}
    H(k_1,...,k_{i-1},\mi,k_{i+1},...,k_{n-1})\in\funcat{}{\Cat}
\end{equation*}
and there is a corresponding Segal map for each $k_i\geq 2$
\begin{equation}\label{eq1-sbs-seg-map}
    H_{\uk}\rw \pro{H_{\uk(1,i)}}{H_{\uk(0,i)}}{k_i}
\end{equation}
identified by the commuting diagram
\begin{equation}\label{eq2-sbs-seg-map}
    \xy
    0;/r.60pc/:
    (0,0)*+{H_{\uk}}="1";
    (-7,-5)*+{H_{\uk(1,i)}}="2";
    (-2,-5)*+{H_{\uk(1,i)}}="3";
    (9,-5)*+{H_{\uk(1,i)}}="4";
    (-10,-9)*+{H_{\uk(0,i)}}="5";
    (-5,-9)*+{H_{\uk(0,i)}}="6";
    (0,-9)*+{H_{\uk(0,i)}}="7";
    (6,-9)*+{H_{\uk(0,i)}}="8";
    (12,-9)*+{H_{\uk(0,i)}}="9";
    (3,-5)*+{\cdots}="10";
    (3,-9)*+{\cdots}="11";
    {\ar_{\nu_1}"1";"2"};
    {\ar^{\nu_2}"1";"3"};
    {\ar^{\nu_k}"1";"4"};
    {\ar_{d_1}"2";"5"};
    {\ar^{d_0}"2";"6"};
    {\ar^{d_1}"3";"6"};
    {\ar^{d_0}"3";"7"};
    {\ar_{d_1}"4";"8"};
    {\ar^{d_0}"4";"9"};
    \endxy
\end{equation}
If $H$ is not a  functor but a pseudo-functor $H\in\psc{n-1}{\Cat}$, diagram \eqref{eq2-sbs-seg-map} no longer commutes but pseudo-commutes and thus we can no longer define Segal maps.
However, if $H_{\uk(0,i)}$ is a discrete category, then diagram  \eqref{eq2-sbs-seg-map} commutes and therefore we can define Segal maps for $H$.

In the definition of Segalic pseudo-functor we require the above discreetness conditions to be satisfied so as to be able to speak about Segal maps and then we require all Segal maps to be isomorphisms.

The last condition in the definition of Segalic pseudo-functor is about the existence of a truncation functor. Applying the isomorphism classes of objects functor $p:\Cat\rw\Set$ to any pseudo-functor in $\psc{n}{\Cat}$ produces a strict functor; that is, there is a functor
\begin{equation*}
    \ovl{p}:\psc{n}{\Cat}\rw\funcat{n-1}{\Set}
\end{equation*}
such that, for all $\uk\in\dop{n-1}$
\begin{equation*}
    (\ovl{p}X)_{\uk}=pX_{\uk}\;.
\end{equation*}
For $X$ to be a Segalic pseudo-functor we require $\ovl{p}X$ to be a weakly globular \nfol category (more precisely, in the image of $J_n:\catwg{n}\rw\funcat{n-1}{\Cat}$). Thus we have a functor
\begin{equation*}
    \p{n+1}:\segpsc{n}{\Cat}\rw\catwg{n}\;.
\end{equation*}
\subsection{Segal maps for special pseudo-functors.}\label{sbs-seg-map-spec-ps}
Let $H\in\Ps\funcat{n}{\Cat}$ be such that $H_{\uk(0,i)}$ is discrete for all $\uk\in\Dmenop$ and all $i\geq 0$. Then the following diagram commutes, for each $k_i\geq 2$.
\begin{equation*}
    \xy
    0;/r.60pc/:
    (0,0)*+{H_{\uk}}="1";
    (-7,-5)*+{H_{\uk(1,i)}}="2";
    (-2,-5)*+{H_{\uk(1,i)}}="3";
    (9,-5)*+{H_{\uk(1,i)}}="4";
    (-10,-9)*+{H_{\uk(0,i)}}="5";
    (-5,-9)*+{H_{\uk(0,i)}}="6";
    (0,-9)*+{H_{\uk(0,i)}}="7";
    (6,-9)*+{H_{\uk(0,i)}}="8";
    (12,-9)*+{H_{\uk(0,i)}}="9";
    (3,-5)*+{\cdots}="10";
    (3,-9)*+{\cdots}="11";
    {\ar_{\nu_1}"1";"2"};
    {\ar^{\nu_2}"1";"3"};
    {\ar^{\nu_k}"1";"4"};
    {\ar_{d_1}"2";"5"};
    {\ar^{d_0}"2";"6"};
    {\ar^{d_1}"3";"6"};
    {\ar^{d_0}"3";"7"};
    {\ar_{d_1}"4";"8"};
    {\ar^{d_0}"4";"9"};
    \endxy
\end{equation*}
There is therefore a unique Segal map
\begin{equation*}
    H_{\uk}\rw \pro{H_{\uk(1,i)}}{H_{\uk(0,i)}}{k_i}\;.
\end{equation*}
\subsection{The definition of Segalic pseudo-functor.}\label{sbs-def-seg-ps}
\begin{definition}\label{def-seg-ps-fun}
    We define the subcategory $\segpsc{n}{\Cat}$ of $\psc{n}{\Cat}$ as follows:

    For $n=1$, $H\in \segpsc{}{\Cat}$ if $H_0$ is discrete and the Segal maps are isomorphisms: that is, for all $k\geq 2$
    \begin{equation*}
        H_k\cong\pro{H_1}{H_0}{k}
    \end{equation*}
    Note that, since $p$ commutes with pullbacks over discrete objects, there is a functor
   \begin{equation*}
   \begin{split}
       & \p{2}:\segpsc{}{\Cat} \rw \Cat\;, \\
       & (\p{2}X)_{k}=p X_k\;.
   \end{split}
   \end{equation*}
   That is the following diagram commutes:
   \begin{equation*}
    \xymatrix{
    \segpsc{}{\Cat} \ar@{^{(}->}[rr]\ar_{\p{2}}[d] && \psc{}{\Cat} \ar^{\ovl{p}}[d]\\
    \Cat \ar[rr] && \funcat{}{\Set}
    }
   \end{equation*}
   When $n>1$, $\segpsc{n}{\Cat}$ is the full subcategory of $\psc{n}{\Cat}$ whose objects $H$ satisfy the following: \mk
   \begin{itemize}
     \item [a)] \emph{Discreteness condition}:  $H_{\uk(0,i)}$ is discrete for all $\uk\in\Dmenop$ and $1 \leq i \leq n$.\mk
     \item [b)] \emph{Segal condition}: All Segal maps are isomorphisms
      \begin{equation*}
      H_{\uk}\cong\pro{H_{\uk(1,i)}}{H_{\uk(0,i)}}{k_i}
      \end{equation*}
      for all $\uk\in\Dmenop$, $1 \leq i \leq n$ and $k_i\geq2$.\mk
     \item [c)] \emph{Truncation functor}: There is a functor
     \begin{equation*}
        \p{n+1}:\segpsc{n}{\Cat}\rw \catwg{n}
     \end{equation*}
   \end{itemize}
    making the following diagram commute:
    \begin{equation*}
        \xymatrix{
        \segpsc{n}{\Cat} \ar@{^(->}[r] \ar_{\p{n+1}}[d] & \psc{n}{\Cat} \ar^{\ovl{p}}[d]\\
        \catwg{n} \ar_{\Nb{n}}[r] & \funcat{n}{\Set}
        }
    \end{equation*}
\end{definition}
\begin{lemma}\label{lem-seg-pse-fun}
    Let $X\in\segpsc{n}{\Cat}$\; $n\geq 2$. Then for each $j\geq 0$ $X_{j*}\in\segpsc{n-1}{\Cat}$.
\end{lemma}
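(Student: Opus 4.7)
The plan is to check, one-by-one, the three defining conditions (a)--(c) of Definition \ref{def-seg-ps-fun} for $X_{j*}\in\psc{n-1}{\Cat}$ by reindexing everything one coordinate to the right. Throughout, write a typical object of $\dop{n-1}$ as $\us=(s_1,\ldots,s_{n-1})$ and observe that $X_{j*}(\us)=X(j,s_1,\ldots,s_{n-1})$, so that the $i$-th simplicial direction of $X_{j*}$ ($1\leq i\leq n-1$) is the $(i+1)$-th simplicial direction of $X$. In particular $\us(0,i)$ for $X_{j*}$ corresponds to the tuple $(j,s_1,\ldots,s_{i-1},0,s_{i+1},\ldots,s_{n-1})$ for $X$, which is of the form $\uk(0,i+1)$ for $X$.

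First I would verify the \emph{discreteness condition} (a): for any $\us\in\dop{n-1}$ and $1\leq i\leq n-1$, the category $(X_{j*})_{\us(0,i)}=X_{(j,s_1,\ldots,s_{i-1},0,s_{i+1},\ldots,s_{n-1})}$ is discrete by condition (a) of Definition \ref{def-seg-ps-fun} applied to $X$ in direction $i+1$. Next, the \emph{Segal condition} (b): since the relevant categories just identified are discrete, the pseudo-commuting square analogous to \eqref{eq2-sbs-seg-map} for $X_{j*}$ strictly commutes (as discussed in Section \ref{sbs-seg-map-spec-ps}), so the Segal maps for $X_{j*}$ are well defined and coincide with the Segal maps of $X$ in direction $i+1$ at first coordinate $j$. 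These are isomorphisms by condition (b) on $X$.

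For the \emph{truncation functor} (c), I would exploit the fact that $\overline{p}$ and the operation $(-)_{j*}$ commute: $(\overline{p}X_{j*})_{\us}=pX_{(j,\us)}=(\overline{p}X)_{(j,\us)}$. Since $X\in\segpsc{n}{\Cat}$, we have $\overline{p}X=\Nb{n}(\p{n+1}X)$ with $\p{n+1}X\in\catwg{n}$; fixing the first coordinate to $j$ on the right-hand side gives $\Nb{n-1}((\p{n+1}X)_j)$, and $(\p{n+1}X)_j\in\catwg{n-1}$ by the very definition of $\catwg{n}$ as a full subcategory of $\funcat{}{\catwg{n-1}}$. Thus we can define
\begin{equation*}
  \p{n}(X_{j*}):=(\p{n+1}X)_j\in\catwg{n-1},
\end{equation*}
and the required commuting square with $\Nb{n-1}$ and $\overline{p}$ follows from the computation above. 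Functoriality in pseudo-natural transformations is automatic from that of $\p{n+1}$ and evaluation at $j$.

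The only mildly delicate point, which I would want to state explicitly, is that restriction along the inclusion $\dop{n-1}\hookrightarrow\dop{n}$ at first coordinate $j$ sends a pseudo-functor to a pseudo-functor (the coherence $2$-cells simply restrict), so $X_{j*}$ is indeed an object of $\psc{n-1}{\Cat}$ to begin with; all three verifications above are then routine bookkeeping in the coordinates, with no genuine obstacle.
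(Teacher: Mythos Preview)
Your proposal is correct and follows essentially the same route as the paper: verify (a), (b), (c) for $X_{j*}$ by shifting indices one step to the right, and identify $\p{n}(X_{j*})$ with $(\p{n+1}X)_j\in\catwg{n-1}$. The paper phrases the argument as an induction on $n$ with a separate base case $n=2$, but the inductive hypothesis is never actually invoked, so your direct verification is equivalent and arguably cleaner.
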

\begin{proof}
By induction on $n$. Let $X\in\segpsc{2}{\Cat}$. Since $X\in\psc{2}{\Cat}$, for each $j\geq 0$ $X_{j*}\in\psc{}{\Cat}$. By definition of Segalic pseudo-functor, $X_{j0}$ is discrete and for each $r\geq 2$
\begin{equation*}
    X_{jr}\cong \pro{X_{j1}}{X_{j0}}{r}\;.
\end{equation*}
By definition this means that $X_{j*}\in\segpsc{}{\Cat}$. Suppose, inductively, that the lemma holds for $(n-1)$ and let $X\in\segpsc{n}{\Cat}$. For each $j\geq 0$, $X_{j*}\in\psc{n-1}{\Cat}$.

Given $\ur\in\dop{n-1}$ denote $\uk=(j,\ur)\in\dop{n}$. Then for each $1 \leq i\leq n-2$,
\begin{equation*}
    X_{\uk(i+1,0)}=(X_j)_{\ur(i,0)}
\end{equation*}
is discrete since $X\in\segpsc{n}{\Cat}$; further, by hypothesis there are isomorphisms:
\begin{align*}
    (X_j)_{\ur} = X_{\uk}\cong & \pro{X_{\uk(i+1,1)}}{X_{\uk(i+1,0)}}{k_{i+1}}\cong\\
    \cong & \pro{(X_j)_{\ur(i,1)}}{(X_j)_{\ur(i,0)}}{r_i}\;.
\end{align*}
To show that $X_j\in\segpsc{n-1}{\Cat}$ it remains to show that $\p{n}X_j\in\catwg{n-1}$ where
\begin{equation*}
    (\p{n}X_j)_{\ur}=p X_{j\ur}
\end{equation*}
for each $\ur\in\dop{n-1}$.

Since $X\in\segpsc{n}{\Cat}$, by definition $\p{n+1}X\in\catwg{n}$ where $(\p{n+1}X)_{\uk}=p X_{\uk}$ for all $\uk\in\dop{n}$. We also observe that, for each $j\geq 0$
\begin{equation}\label{eq-lem-seg-pse-fun}
    (\p{n+1}X)_j=\p{n}X_j
\end{equation}
since, for each $\ur\in\dop{n-1}$,
\begin{equation*}
    (\p{n+1}X)_{j\ur}=p X_{j\ur}=(\p{n}X_j)_{\ur}\;.
\end{equation*}
Since $\p{n+1}X\in\catwg{n}$, $(\p{n+1}X)_j\in\catwg{n-1}$ so by \eqref{eq-lem-seg-pse-fun} we conclude that $\p{n}X_j\in\catwg{n-1}$ as required.
\end{proof}
\subsection{Segalic pseudo-functors and $\mathbf{n}$-fold categories}\label{sbs-seg-pse-nfold-cat}
In the following proposition we show that if an \nfol category is levelwise equivalent (as a diagram in $\funcat{n-1}{\Cat}$) to a Segalic pseudo-functor, then it is a weakly globular \nfol category.
\begin{proposition}\label{pro-transf-wg-struc}
    Let $H\in \segpsc{n-1}{\Cat}$ and let $L\in\Catn$ be such that there is an equivalence of categories $(J_n L)_{\uk}\simeq H_{\uk}$ for all $\uk\in\Dmenop$, then
    \begin{itemize}
      \item [a)] $L\in\catwg{n}$.
      \item [b)] If, further, $H_{\uk}\in \cathd{}$ for all $\uk$ and $\pn H\in \cathd{n-1}$, then $L\in\cathd{n}$.
    \end{itemize}
\end{proposition}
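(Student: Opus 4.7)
The proof goes by simultaneous induction on $n$ for both parts. The base case $n=2$ follows from Lemma \ref{lem-crit-doucat-wg} and Corollary \ref{cor-crit-nequiv-rel}: $L_0\simeq H_0$ is discrete hence lies in $\cathd{}$, and $\bar p J_2 L\cong\bar p H=\p{2}H\in N\Cat$, so $L\in\catwg{2}$; if moreover $H_k\in\cathd{}$ and $\p{2}H\in\cathd{}$, then $L_1\simeq H_1$ and $\p{2}L\cong\p{2}H$ both lie in $\cathd{}$, giving $L\in\cathd{2}$.

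For the inductive step of (a), I would verify the two hypotheses of Proposition \ref{pro-crit-ncat-be-wg}(b). The key preparatory observation is a restriction principle for Segalic pseudo-functors: fixing any coordinate of $H$ at a value yields a pseudo-functor $H'\in\psc{n-2}{\Cat}$ that is again Segalic in one lower dimension. Indeed, discreteness and the Segal condition are inherited from $H$, and the truncation $\p{n-1}H'$ agrees with the corresponding slice of $\pn H$ (since on multinerves both are computed by $p\,H_\bullet$), so that $\p{n-1}H'\in\catwg{n-2}$. Applied to $H'_{\ur}=H_{0,\ur}$: the values $H'_{\ur}$ are discrete and $\p{n-1}H'=(\pn H)_0\in\cathd{n-2}$ by weak globularity of $\pn H\in\catwg{n-1}$. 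Since $(J_{n-1}L_0)_{\ur}=(J_n L)_{0,\ur}\simeq H'_{\ur}$, the inductive hypothesis (b) applied to $L_0$ gives $L_0\in\cathd{n-1}$; the analogous double restriction in coordinates $(s,0,-)$ yields $L_{s0}\in\cathd{n-2}$. For condition (ii), since $p:\Cat\rw\Set$ sends equivalences to isomorphisms, $\bar p J_n L\cong \bar p H=\Nb{n-1}\pn H\in\Nb{n-1}\catwg{n-1}$. Proposition \ref{pro-crit-ncat-be-wg}(b) then gives $L\in\catwg{n}$.

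For the inductive step of (b), (a) already gives $L\in\catwg{n}$, so by Corollary \ref{cor-crit-nequiv-rel} it suffices to check $L_1\in\cathd{n-1}$ and $\pn L\in\cathd{n-1}$. For $L_1$, restrict $H$ at first coordinate $1$ to obtain $H''\in\segpsc{n-2}{\Cat}$ with $H''_{\ur}=H_{1,\ur}$; the extra assumptions of (b) ensure $H''_{\ur}\in\cathd{}$ and $\p{n-1}H''=(\pn H)_1\in\cathd{n-2}$ (from $\pn H\in\cathd{n-1}$). The inductive hypothesis (b) applied to $L_1$ with slice equivalences $(J_{n-1}L_1)_{\ur}\simeq H''_{\ur}$ then gives $L_1\in\cathd{n-1}$. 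For $\pn L$, combine the identity $\Nb{n-1}\pn L=\bar p J_n L$ established inside the proof of Proposition \ref{pro-crit-ncat-be-wg}(b) with the equivalence $\bar p J_n L\cong \Nb{n-1}\pn H$; full faithfulness of $\Nb{n-1}$ then yields $\pn L\cong\pn H\in\cathd{n-1}$. Hence $L\in\cathd{n}$.

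The principal technical point is the restriction lemma for Segalic pseudo-functors, namely that fixing a coordinate produces a Segalic pseudo-functor one dimension lower whose truncation matches the corresponding slice of $\pn H$. This compatibility is what propagates the membership of $\pn H$ in $\catwg{n-1}$ (respectively $\cathd{n-1}$) down to the dimensions where the inductive hypothesis can be fed back; once it is isolated, both parts reduce to direct applications of the criteria of Proposition \ref{pro-crit-ncat-be-wg}(b) and Corollary \ref{cor-crit-nequiv-rel}.
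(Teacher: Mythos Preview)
Your proof is correct and follows essentially the same route as the paper: induction on $n$, base case via Lemma~\ref{lem-crit-doucat-wg} and Corollary~\ref{cor-crit-nequiv-rel}, and the inductive step for (a) via Proposition~\ref{pro-crit-ncat-be-wg}(b) and for (b) via Corollary~\ref{cor-crit-nequiv-rel}, with the restriction principle you isolate being exactly Lemma~\ref{lem-seg-pse-fun} (applied to the first coordinate, possibly iterated). The one minor difference is how you obtain $L_{s0}\in\cathd{n-2}$: the paper first applies the inductive hypothesis~(a) to $L_s$ and $H_s$ to get $L_s\in\catwg{n-1}$, and then reads off $L_{s0}\in\cathd{n-2}$ from the weak globularity condition, whereas you go directly via a double restriction $H_{s,0,\bullet}$ and the inductive hypothesis~(b) at level $n-2$; both are valid and neither is materially simpler.
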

\begin{proof}
By induction on $n$. For $n=2$, if $H\in\segpsc{}{\Cat}$, then by definition $H_0$ is discrete; thus $L_0\in\cathd{}$. Also, since $L_k\simeq H_k$ for all $k\in\Dop$, $p L_k\cong p H_k$, and therefore $\bar p L\cong \bar p H =\p{2} H$ is the nerve of a category. So by Proposition \ref{pro-crit-ncat-be-wg} b) $L\in\catwg{2}$.

If, further, $H_k\in\cathd{}$ for all $k$ and $\p{2}H\in\cathd{}$, then $L_1 \in \cathd{}$ (since $L_1 \sim H_1$) and $\p{2}L =\p{2}H \in \cathd{}$. Therefore, by Corollary \ref{cor-crit-nequiv-rel}, $L\in\cathd{2}$.

Suppose, inductively, that the lemma holds for $n-1$ and let $L$ and $H$ be as in the hypothesis a).

We are going to show that $L\in\cat{n}$ satisfies the hypotheses of Proposition \ref{pro-crit-ncat-be-wg} b) which then implies that $L\in\catwg{n}$.

Let $\ur\in\dop{n-2}$ and denote $\uk=(i,\ur)\in\dop{n-1}$. By hypothesis, there are equivalences of categories
\begin{equation*}
    (J_{n-1}L_i)_{\ur}=J_n L_{\uk}\cong H_{\uk}=(H_i)_{\ur}\;.
\end{equation*}
Since $L\in\cat{n}$, $L_i\in\cat{n-1}$ and since $H\in\segpsc{n}{\Cat}$, by Lemma \ref{lem-seg-pse-fun}, $H_i\in\segpsc{n-1}{\Cat}$.

%
Thus $H_{i\bl}$ and $L_{i\bl}$ satisfy the inductive hypothesis a) and we conclude that $L_{i\bl}\in \catwg{n-1}$. In particular, this implies that $L_{i0}\in \cathd{n-2}$. Thus, by Proposition \ref{pro-crit-ncat-be-wg} b), to show that $L\in \catwg{n}$ it is enough to prove that $L_{0\bl}\in\cathd{n-1}$ and that $\bar p J_n L\in \Nb{n-1}\catwg{n-1}$.
We have $(H_{0\bl})_{\uk}=H_{\uk(0,0)}$ discrete and
\begin{equation*}
    \p{n-1}H_{0\bl}=(\p{n}H)_0 \in\cathd{n-2}
\end{equation*}
since by hypothesis $\p{n}H\in\catwg{n}$ as $H\in\segpsc{n-1}{\Cat}$. Thus $H_{0\bl}$ and $L_{0\bl}$ satisfy the inductive hypothesis b) and we conclude that $L_{0\bl}\in\cathd{n-1}$. For each $\uk \in\dop{n-2}$,
\begin{equation*}
    (\bar p J_n L)_{\uk} = p L_{\uk} \cong p H_{\uk} =(\bar p J_n H)_{\uk}\;.
\end{equation*}
Since $\p{n}H\in\catwg{n}$, this means that $\bar p J_n L\in \Nb{n-1}\catwg{n-1}$. By Proposition \ref{pro-crit-ncat-be-wg} b), we conclude that $L\in\catwg{n}$, proving a) at step $n$.

Suppose that $H$ is as in b). By Corollary \ref{cor-crit-nequiv-rel}, to show that $L\in\cathd{n}$, it is enough to show that $L_1 \in\cathd{n-1}$ and $\p{n}L \in \cathd{n-1}$.
For all $\uk\in\dop{n-2}$ there is an equivalence of categories
\begin{equation*}
    (L_{1\bl})_{\uk}\simeq (H_{1\bl})_{\uk}
\end{equation*}
therefore, since by hypothesis, $(H_{1\bl})_{\uk}\in\cathd{}$, also $(L_{1\bl})_{\uk}\in\cathd{}$. Further, since $\p{n}H\in \cathd{n-1}$, then
\begin{equation*}
    \p{n-1}H_{1\bl}=(\p{n}H)_{1\bl}\in \cathd{n-2}\;.
\end{equation*}
Thus $L_{1\bl}$ and $H_{1\bl}$ satisfy induction hypothesis and we conclude that $L_{1\bl}\in\cathd{n-1}$. Finally,
\begin{equation*}
    \p{n}L=\p{n}H\in\cathd{n-1}\;.
\end{equation*}
Thus by Corollary \ref{cor-crit-nequiv-rel} we conclude that $L\in\cathd{n}$.
\end{proof}
\subsection{Strictification of Segalic pseudo-functors}\label{sbs-stric-pse-fun}
In this section we prove our main result, Theorem \ref{the-strict-funct}, that the strictification functor applied to the category of Segalic pseudo-functors gives a weakly globular \nfol category. The strategy to prove this result is to show that the strictification of a Segalic pseudo-functor is an \nfol category and that it satisfies the hypotheses of Proposition \ref{pro-transf-wg-struc}.

\begin{lemma}\label{lem-strict-mon-wg-seg-ps-fun}
    Let $T$ be the monad corresponding to the adjunction given by the forgetful functor
    \begin{equation*}
        U:\funcat{n}{\Cat}\rw [ob(\Dnop),\Cat ]
    \end{equation*}
    and its left adjoint. Let $H\in \segpsc{n}{\Cat}$, then
    \begin{itemize}
      \item [a)] The pseudo $T$-algebra corresponding to $H$ has structure map $h: TUH\rw H$ as follows:
          \begin{equation*}
            (TUH)_{\uk}=\underset{\ur\in\zD^n}{\coprod}\zD^n(\uk,\ur)\times H_{\ur}=\underset{\ur\in\zD^n}{\coprod}\underset{\zD^n(\uk,\ur)}{\coprod} H_{\ur}\;.
          \end{equation*}
          If $f\in\zD^n(\uk,\ur)$, let
          \begin{equation*}
            i_{\ur}=\underset{\zD^n(\uk,\ur)}{\coprod}H_{\ur}\rw \underset{\ur\in\zD^n}{\coprod}\underset{\zD^n(\uk,\ur)}{\coprod} H_{\ur}=(TUH)_{\ur}
          \end{equation*}
          \begin{equation*}
            j_f :H_{\ur}\rw \underset{\zD^n(\uk,\ur)}{\coprod}H_{\ur}\;,
          \end{equation*}
          then
          \begin{equation*}
            h_{\uk}\,i_{\ur}\,j_f= H(f)\;.
          \end{equation*}
      \item [b)] There are functors for each $\uk \in \zD^n$, $0\leq i \leq n$,
      \begin{equation*}
        \pt_{i1}, \pt_{i0}:(TUH)_{\uk(1,i)}\rightrightarrows (TUH)_{\uk(0,i)}
      \end{equation*}
      such that the following diagram commutes
      \begin{equation}\label{fig.pro.strict-mon}
        \xymatrix@R=35pt @C=50pt{
        (TUH)_{\uk(1,i)} \ar^{h_{\uk(1,i)}}[r] \ar^{\pt_{i1}}[d]<1ex> \ar_{\pt_{i0}}[d]<-1ex> & H_{\uk(1,i)} \ar^{d_{i1}}[d]<1ex> \ar_{d_{i0}}[d]<-1ex>  \\
        (TUH)_{\uk(0,i)} \ar_{h_{\uk(0,i)}}[r] & H_{\uk(0,i)}
        }
      \end{equation}
      \item [c)] For all $\uk=(k_1,\ldots, k_n)\in\zD^n$ there are  isomorphisms
      \begin{equation*}
        (TUH)_{\uk}=\pro{(TUH)_{\uk(1,i)}}{(TUH)_{\uk(0,i)}}{k_i}\;.
      \end{equation*}
      \item [d)] The morphism $h_{\uk}:(TUH)_{\uk}\rw H_{\uk}$ is given by
      \begin{equation*}
        h_{\uk}=(h_{\uk(1,i)},\ldots,h_{\uk(1,i)})
      \end{equation*}
    \end{itemize}
\end{lemma}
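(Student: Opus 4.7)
The plan is to verify the four parts in sequence. Parts (a), (b), (d) will follow formally from the standard description of pseudo $T$-algebras in \cite{PW}, with the discreteness condition of Definition \ref{def-seg-ps-fun} ensuring that the relevant coherence isomorphisms collapse to identities; part (c) is the technical heart where the free-monad combinatorics is assembled explicitly. For part (a), the adjunction formula for $F \dashv U$ directly gives $(FUH)_{\uk} \cong \coprod_{\ur} \zD^n(\uk, \ur) \times H_{\ur}$, and since $T = UF$ this describes $(TUH)_{\uk}$. Under the correspondence in \cite{PW} between pseudo $T$-algebras and pseudo-functors $\Dnop \to \Cat$, the structure map $h: TUH \to H$ is determined on each coproduct summand by $h_{\uk} \circ i_{\ur} \circ j_f = H(f)$: applying the monad action to the formal operation $f$ simply evaluates the pseudo-functor on $f$.

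For part (b), I will define $\partial_{i1}, \partial_{i0}: (TUH)_{\uk(1,i)} \to (TUH)_{\uk(0,i)}$ componentwise, acting on the summand $\zD^n(\uk(1,i), \ur) \times H_{\ur}$ by precomposition with the face $\uk(0,i) \to \uk(1,i)$ in $\zD^n$ and identity on $H_{\ur}$. Commutativity of \eqref{fig.pro.strict-mon} then reduces on each summand to the identity $d_{ij} \circ H(f) = H(d_{ij} \circ f)$, which holds strictly because the pseudo-functor coherence 2-cell for this composite involves a map whose target lies in $H_{\uk(0,i)}$, a discrete object by Definition \ref{def-seg-ps-fun}, and is therefore the identity.

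Part (c) is the main obstacle. The plan is as follows: because the $\partial_{ij}$ from (b) act summand-by-summand on the decomposition $(TUH)_{\uk(1,i)} = \coprod_{\ur} \zD^n(\uk(1,i), \ur) \times H_{\ur}$ (and similarly on the target), the $k_i$-fold pullback decomposes as a coproduct over $\ur$ of pullbacks of the form $\pro{(\zD^n(\uk(1,i), \ur) \times H_{\ur})}{(\zD^n(\uk(0,i), \ur) \times H_{\ur})}{k_i}$. Since the $\partial_{ij}$ act by identity on the $H_{\ur}$ factor, each such summand simplifies to $(\pro{\zD^n(\uk(1,i), \ur)}{\zD^n(\uk(0,i), \ur)}{k_i}) \times H_{\ur}$. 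The standard fact that $[k_i]$ is the $k_i$-fold pushout of $[1]$ over $[0]$ in $\Delta$ then gives $\pro{\zD^n(\uk(1,i), \ur)}{\zD^n(\uk(0,i), \ur)}{k_i} \cong \zD^n(\uk, \ur)$, and reassembling over $\ur$ recovers $(TUH)_{\uk}$ as required. The subtlety lies in verifying that the $\partial_{ij}$ indeed respect the $\ur$-decomposition and that pullback distributes correctly over the outer coproduct.

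For part (d), once (b) and (c) are in place, the universal property of the pullback in (c) determines $h_{\uk}$ uniquely from its composite with each of the $k_i$ projections onto $(TUH)_{\uk(1,i)}$. By the commutative square in (b), each such composite equals $h_{\uk(1,i)}$ applied to the corresponding factor, yielding $h_{\uk} = (h_{\uk(1,i)}, \ldots, h_{\uk(1,i)})$ as claimed.
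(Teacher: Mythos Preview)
Your treatment of parts (a), (b), (c) matches the paper's proof essentially step for step: the same description of the structure map via coproduct summands, the same definition of $\partial_{ij}$ by precomposition on the index factor, the same reduction of the commuting square to strict functoriality on composites landing in the discrete $H_{\uk(0,i)}$, and the same pushout decomposition $[k_i]\cong[1]\amalg_{[0]}\cdots\amalg_{[0]}[1]$ to identify the hom-sets and hence the pullback.

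Part (d), however, does not go through as you have written it. You invoke the universal property of the pullback in (c), but that is the pullback expressing $(TUH)_{\uk}$, which controls maps \emph{into} it, not maps out of it such as $h_{\uk}$. What is actually needed is the Segal isomorphism for $H_{\uk}$, whose projections are the $H(\nu_j):H_{\uk}\to H_{\uk(1,i)}$. You then appeal to the square in (b) to compute these composites, but (b) concerns the face maps $\zd_{ij}:\uk(0,i)\to\uk(1,i)$, whose commutativity with $h$ holds strictly precisely because $H_{\uk(0,i)}$ is discrete. The projections $\nu_j:\uk(1,i)\to\uk$ land in $H_{\uk(1,i)}$, which need not be discrete, so the analogue of (b) is not available. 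The paper sidesteps this by verifying (d) directly on coproduct summands: for $f\in\zD^n(\uk,\ur)$ corresponding to $(\zd_1,\ldots,\zd_{k_i})$ under the hom-set bijection of (c), one computes $h_{\uk}\,i_{\ur}\,j_f = H(f)$ via (a), identifies this with $(H(\zd_1),\ldots,H(\zd_{k_i}))$ under the Segal isomorphism for $H$, and recognises each component as $h_{\uk(1,i)}\,i_{\ur}\,j_{\zd_j}$ again by (a). This summand-by-summand check is what you should substitute for your universal-property argument.
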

\begin{proof}\

a) From the general correspondence between pseudo $T$-algebras and pseudo-functors, the pseudo $T$-algebra corresponding to $H$ has a structure map $h:TUH\rw H$ as stated. The rest follows from the fact that, if $X$ is a set and $\clC$ is a category, $X\times \clC \cong \uset{X}{\coprod}\clC$.\bk

b) Let $\nu_j:[0]\rw [1]$, $\nu_j(0)=0$. $\nu_j(1)=i$ for $j=0,1$ and let $\zd_{ij}:\uk(0,i)\rw\uk(1,i)$ be given by
\begin{equation*}
    \zd_{ij}(k_s)=\left\{
    \begin{array}{ll}
     k_s, & \hbox{$s\neq i$;} \\
     \nu_{j}(k_i), & \hbox{$s=i$.}
     \end{array}
     \right.
\end{equation*}
Given $f\in \zD^n(\uk(1,i), \ur)$ let $j_f$ and $i_{\ur}$ be the corresponding coproduct injections as in a). Let
\begin{equation*}
    \pt_{ij}:(TUH)_{\uk(1,i)}\rw (TUH)_{\uk(0,i)}
\end{equation*}
be the functors determined by
\begin{equation*}
    \pt_{ij}i_{\ur}j_f = i_{\ur} j_{f\zd_{ij}}\;.
\end{equation*}
From a), we have
\begin{align*}
   & h_{\uk(0,i)}\,\pt_{ij}\, i_{\ur}\,j_f = h_{\uk(0,i)}\, i_{\ur} \, j_{f\zd_{ij}}= H(f\zd_{ij})\\
& d_{ij}\,h_{\uk(1,i)}\, i_{\ur}\, j_{f} = H(\zd_{ij})\, H(f)\;.
\end{align*}
Since $H\in\Ps\funcat{n}{\Cat}$ and $H_{\uk(0,i)}$ is discrete, it is $H(f\zd_{ij})=H(\zd_{ij})H(f)$ so that, from above,
\begin{equation*}
    h_{\uk(0,i)}\pt_{ij}i_{\ur}j_f = d_{ij} h_{\uk(1,i)}i_{\ur} j_f
\end{equation*}
for each $\ur,f$. We conclude that
\begin{equation*}
     h_{\uk(0,i)}\pt_{ij}=d_{ij} h_{\uk(1,i)}\;.
\end{equation*}
That is, diagram \eqref{fig.pro.strict-mon} commutes.\bk

c) Since, for each $k_i\geq 2$
\begin{equation*}
    [k_i] = [1]\uset{[0]}{\coprod}\; \oset{k_i}{\ldots}\;
    \uset{[0]}{\coprod}[1]
\end{equation*}
we have, for each $\uk = (k_1,...,k_n)\in\zD^n$ with $k_i\geq 2$ for all $1\leq i\leq n$
\begin{equation*}
    \uk = \uk(1,i)\uset{\uk(0,i)}{\coprod}\; \oset{k_i}{\ldots}\;
    \uset{\uk(0,i)}{\coprod}\uk(1,i)\;.
\end{equation*}
Therefore we have a bijection
\begin{equation*}
    \zD^n(\uk,\ur)=\pro{\zD^n\big(\uk(1,i),\ur\big)}{\zD^n(\uk(0,i),\ur)}{k_i}\;.
\end{equation*}
From the proof of b), the functors $\pt_{ij}:(TUH)_{\uk(1,i)}\rw (TUH)_{\uk(0,i)}$ for $j=0,1$ are determined by the functors
\begin{equation*}
    (\overline{\zd_{ij}}, id):\zD^n\big(\uk(1,i),\ur\big)\times H_{\ur} \rw \zD^n\big(\uk(0,i),\ur\big)\times H_{\ur}
\end{equation*}
where $\overline{\zd_{ij}}(g)=g\zd_{ij}$ for $g\in \zD^n\big(\uk(1,i),\ur\big)$ and
\begin{align*}
    & (TUH)_{\uk(1,i)}=\uset{\ur}{\coprod}\zD^n\big(\uk(1,i),\ur\big)\times H_{\ur}\\
    & (TUH)_{\uk(0,i)}=\uset{\ur}{\coprod}\zD^n\big(\uk(0,i),\ur\big)\times H_{\ur}\;.
\end{align*}
It follows that
\begin{align*}
    & \pro{(TUH)_{\uk(1,i)}}{(TUH)_{\uk(0,i)}}{k_i}=\\
    & \uset{\ur}{\coprod}\{ \pro{\zD^n\big(\uk(1,i),\ur\big)}{\zD^n(\uk(0,i),\ur)}{k_i} \}\times H_{\ur}=\\
    & \uset{\ur}{\coprod}\zD^n(\uk,\ur)\times H_{\ur}=(TUH)_{\uk}\;.
\end{align*}
This proves c). \bk

d) From a), ${h_k\,i_{r}\,j_f}=H(f)$ for $f \in \zD^n(\uk,\ur)$. Let $f$ correspond to $(\zd_1,\ldots,\zd_{ki})$ in the isomorphism
\begin{equation*}
    \zD^n(\uk,\ur)=\pro{\zD^n\big(\uk(1,i),\ur\big)}{\zD^n(\uk(0,i),\ur)}{k_i}\;.
\end{equation*}
Then $j_f=(j_{\zd_1},\ldots,j_{\zd_{k_i}})$. Since
\begin{equation*}
    H_{\uk} \cong \pro{H_{\uk(1,i)}}{H_{\uk(0,i)}}{k_i}
\end{equation*}
then $H(f)$ corresponds to $(H(\zd_1),\ldots,H(\zd_{k_i}))$ with $p_i H(f)=H(\zd_i)$. Then for all $f$ we have
\begin{align*}
    h_{\uk}\,i_{\ur}\,j_f &=(H(\zd_1),\ldots,H(\zd_{k_i}))=(h_{\uk(1,i)}\,i_{\ur}\,j_{\zd_1},\ldots, h_{\uk(1,i)}\,i_{\ur}\,j_{\zd_{k_i}})=\\
    & =(h_{\uk(1,i)},\ldots,h_{\uk(1,i)})\,i_{\ur}(j_{\zd_1},\ldots, j_{\zd_{k_i}})=(h_{\uk(1,i)},\ldots,h_{\uk(1,i)})\,i_{\ur}\,j_f\;.\\
\end{align*}
It follows that $h_{\uk}=(h_{\uk(1,i)},\ldots,h_{\uk(1,i)})$.
\end{proof}
\begin{theorem}\label{the-strict-funct}
    The strictification functor
    \begin{equation*}
        \St  : \psc{n-1}{\Cat}\rw \funcat{n-1}{\Cat}
    \end{equation*}
    restricts to a functor
    \begin{equation*}
        L_n: \segpsc{n-1}{\Cat}\rw J_n\catwg{n}
    \end{equation*}
    where $\Nb{n}\catwg{n}$ denotes the image of the multinerve functor $J_n:\catwg{n}\rw\funcat{n-1}{\Cat}$\;.
    Further, for each $H\in \segpsc{n-1}{\Cat}$ and $\uk\in\dop{n-1}$, the map $(L_n H)_{\uk}\rw H_{\uk}$ is an equivalence of categories.
\end{theorem}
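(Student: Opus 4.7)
The plan is to apply the classical strictification of \cite{PW,Lack} to obtain $L=\St H\in\funcat{n-1}{\Cat}$ together with a pseudo-natural transformation $g\colon L\to H$ whose components $g_{\uk}$ are equivalences of categories. The last clause of the theorem follows immediately from this. The main content is therefore to show that $L$ lies in $J_n(\catwg{n})$. I would proceed in two stages: first establish $L\in J_n(\cat{n})$ via Lemma \ref{lem-char-obj}; then deduce $L\in\catwg{n}$ by applying Proposition \ref{pro-transf-wg-struc}(a) to the levelwise equivalence $g\colon L\to H$ together with the Segalic pseudo-functor $H$.

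The main obstacle is verifying the Segal isomorphism
\begin{equation*}
    L_{\uk} \cong \pro{L_{\uk(1,i)}}{L_{\uk(0,i)}}{k_i}
\end{equation*}
for each $\uk\in\dop{n-1}$, $1\leq i\leq n-1$ and $k_i\geq 2$. For this I would exploit Lemma \ref{lem-strict-mon-wg-seg-ps-fun} together with the factorization $TUH\xrw{v}L\xrw{g}H$ of the pseudo-algebra structure map $h$, in which $v_{\uk}$ is bijective on objects and $g_{\uk}$ is fully faithful. Part (c) of that lemma exhibits $(TUH)_{\uk}$ as precisely this pullback, and part (d) shows that $h_{\uk}$ respects the decomposition. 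Since $v_{\uk}$ is bijective on objects, $\Obj L_{\uk}=\Obj(TUH)_{\uk}$ inherits the required pullback of object sets. For morphisms, fully-faithfulness of $g_{\uk}$ identifies $L_{\uk}(x,y)$ with $H_{\uk}(gx,gy)$; the Segal condition on $H$ together with the discreteness of $H_{\uk(0,i)}$ (so that a pullback of hom-sets over a discrete category reduces to a plain product indexed by compatible object-tuples) decomposes this hom-set as a $k_i$-fold product of hom-sets in $H_{\uk(1,i)}$; and re-applying fully-faithfulness of $g_{\uk(1,i)}$ expresses it back in terms of hom-sets in $L_{\uk(1,i)}$. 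Composition, faces and degeneracies are controlled by the corresponding structure in $TUH$ and in $H$ transported through $v$ and $g$.

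With the Segal isomorphisms in place, Lemma \ref{lem-char-obj} delivers $L\in J_n(\cat{n})$. The pseudo-natural equivalence $g\colon L\to H$ combined with the hypothesis $H\in\segpsc{n-1}{\Cat}$ then places $L$ in the hypothesis of Proposition \ref{pro-transf-wg-struc}(a), giving $L\in\catwg{n}$. Functoriality of $L_n$ is inherited from functoriality of $\St$, and the fact that each $g_{\uk}$ is an equivalence of categories yields the final clause. The subtlest point in the above outline is the passage through morphisms in pullbacks over $L_{\uk(0,i)}$: although $H_{\uk(0,i)}$ is discrete, $L_{\uk(0,i)}$ need not be, so care is needed to check that the compatibility conditions in $\pro{L_{\uk(1,i)}}{L_{\uk(0,i)}}{k_i}$ are exactly what the fully-faithful comparison with $H$ imposes; the discreteness of $H_{\uk(0,i)}$ is used here to ensure each $L_{\uk(0,i)}$ has only identity endomorphisms, so the pullback conditions collapse as needed.
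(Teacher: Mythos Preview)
Your overall architecture matches the paper exactly: obtain $L=\St H$ with the pseudo-natural equivalence $g\colon L\to H$, verify the Segal isomorphisms for $L$ so that Lemma~\ref{lem-char-obj} gives $L\in\cat{n}$, and then invoke Proposition~\ref{pro-transf-wg-struc}(a).

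Where you diverge is in \emph{how} the Segal isomorphism
\[
L_{\uk}\;\cong\;\pro{L_{\uk(1,i)}}{L_{\uk(0,i)}}{k_i}
\]
is established. You argue directly: objects via the bijection $v_{\uk}$ and Lemma~\ref{lem-strict-mon-wg-seg-ps-fun}(c), morphisms via fully-faithfulness of $g$ together with the Segal condition on $H$. This works, but the delicate point you yourself flag---that $L_{\uk(0,i)}$ is not discrete---requires you to observe that, because $g_{\uk(0,i)}$ is fully faithful into a discrete category, $L_{\uk(0,i)}$ has \emph{at most one morphism between any two objects} (not merely ``only identity endomorphisms'' as you wrote), so the morphism-level pullback constraint is vacuous. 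With that correction your argument goes through.

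The paper sidesteps this entirely by exploiting that (bijective-on-objects, fully faithful) is a \emph{factorization system} on $\Cat$. Using Lemma~\ref{lem-strict-mon-wg-seg-ps-fun}(b)--(d) it produces a second factorization of the same map $h_{\uk}$,
\[
(TUH)_{\uk}\xrightarrow{(v_{\uk(1,i)},\ldots,v_{\uk(1,i)})}\pro{L_{\uk(1,i)}}{L_{\uk(0,i)}}{k_i}\xrightarrow{(g_{\uk(1,i)},\ldots,g_{\uk(1,i)})}H_{\uk},
\]
checks that the first leg is bijective on objects and the second fully faithful, and concludes by uniqueness of factorizations. This is cleaner: no separate bookkeeping on objects and morphisms, and no need to analyse $L_{\uk(0,i)}$ at all. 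Your approach, on the other hand, is more self-contained and does not rely on the orthogonality/uniqueness property of the factorization system.
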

\begin{proof}
Let $h:TUH\rw UH$ be as in Section \ref{sbs-pseudo-functors}. As recalled there, to construct the strictification $L=\St H$ of a pseudo-functor $H$ we need to factorize $h=gv$ in such a way that for each $\uk \in \dop{n-1}$, $h_{\uk}$ factorizes as
\begin{equation*}
    (TUH)_{\uk}\xrw{v_{\uk}} L_{\uk} \xrw{g_{\uk}} H_{\uk}
\end{equation*}
with $v_{\uk}$ bijective on objects and $g_{\uk}$ fully faithful. As explained in \cite{PW}, $g_{\uk}$ is in fact an equivalence of categories.

Since the bijective on objects and fully faithful functors form a factorization system in $\Cat$, the commutativity of \eqref{fig.pro.strict-mon} implies that there are functors
\begin{equation*}
    \tilde d_{ij}:L_{\uk(1,i)}\rightrightarrows L_{\uk(0,i)}\qquad j=0,1
\end{equation*}
such that the following diagram commutes:
\begin{equation*}
\xymatrix{
(TUH)_{\uk(1,i)} \ar^{v_{\uk(1,i)}} [rr] \ar_{\pt_{i0}}[d]<-2ex>\ar^{\pt_{i1}}[d]&& L_{\uk(1,i)} \ar^{g_{\uk(1,i)}} [rr] \ar_{\tilde d_{i0}}[d]<-2ex>\ar^{\tilde d_{i1}}[d] && H_{\uk(1,i)}\ar_{d_{i0}}[d]<-2ex>\ar^{d_{i1}}[d]\\
(TUH)_{\uk(0,i)} \ar_{v_{\uk(0,i)}} [rr] && L_{\uk(0,i)} \ar_{g_{\uk(0,i)}} [rr] && H_{\uk(0,i)}\;.
}
\end{equation*}
By Proposition \ref{lem-strict-mon-wg-seg-ps-fun}, $h_{\uk}$ factorizes as
\begin{align*}
    (TUH)_{\uk}=\; & \pro{(TUH)_{\uk(1,i)}}{(TUH)_{\uk(0,i)}}{k_i}\rw\\
    &\xrw{(v_{\uk(1,i)},\ldots,v_{\uk(1,i)})}\pro{L_{\uk(1,i)}}{L_{\uk(0,i)}}{k_i}\rw\\
    & \xrw{(g_{\uk(1,i)},\ldots,g_{\uk(1,i)})}\pro{H_{\uk(1,i)}}{H_{\uk(0,i)}}{k_i}\cong H_{\uk}\;.
\end{align*}
Since $v_{\uk(1,i)}$ and $v_{\uk(0,i)}$ are bijective on objects, so is $(v_{\uk(1,i)},\ldots,v_{\uk(1,i)})$. Since  $g_{\uk(1,i)}$, $g_{\uk(0,i)}$ are fully faithful, so is $(g_{\uk(1,i)},\ldots,g_{\uk(1,i)})$. Therefore the above is the factorization of $h_{\uk}$ and we conclude that
\begin{equation*}
    L_{\uk}\cong\pro{L_{\uk(1,i)}}{L_{\uk(0,i)}}{k_i}\;.
\end{equation*}
Since $L\in\funcat{n-1}{\Cat}$ by Lemma \ref{lem-char-obj} this implies that $L\in\cat{n}$. By \cite{PW}, $L_{\uk}\simeq H_{\uk}$ for all $\uk\in\dop{n-1}$. Therefore, by Lemma \ref{pro-transf-wg-struc}, $L\in\catwg{n}$.
\end{proof}



\begin{thebibliography}{xxxx}

\bibitem[1]{BD2} J.Baez, J.Dolan, Higher-dimensional algebra III: $n$-categories and the algebra of opetopes, \emph{Advances in Mathematics} 135(2) (1998) 145-206.

\bibitem[2]{BD} J. C.  Baez, J. Dolan, Higher dimensional algebra and topological quantum field theory, \emph{J. Math. Phys.}, 36 (1995) 6073-6105.

\bibitem[3]{Bk1} C.  Barwick, D. M. Kan, Relative categories: another model for the homotopy theory of homotopy theories. Indag. Math. (N.S.) 23 (2012), no. 1-2, 42-68

\bibitem[4]{B}  M.  A.  Batanin,  Monoidal  globular  categories  as a natural environment for the theory of weak n-categories. \emph{Adv. Math.} 136 (1998), no. 1, 39-103

\bibitem [5]{Ben} J.  B\'{e}nabou, \emph{Introduction to bicategories}, Reports of the Midwest Category Seminar, Springer 1967.

\bibitem[6]{BeRe} J.  Bergner, C. Rezk, Comparison of models for $(\infty,n)$-categories 1, \emph{Geom. Topol.} 17 (2013) 4, 2163-2202.

\bibitem[7]{Be3}  J.   Bergner, Models for $(\infty,n)$-categories and the cobordism hypothesis , \emph{Proc. Sympos. Pure Math.}, 83, Amer. Math. Soc., Providence, RI, 2011.

\bibitem[8]{Be2}  J.  Bergner, Three models for the homotopy theory of homotopy theories, \emph{Topology} 46 (2007), 397-436.

\bibitem[9]{BP} D.  Blanc, S. Paoli, Segal-type algebraic models of n-types, \emph{Algebraic and Geometric Topology} 14 (2014) 3419-3491.

\bibitem[10]{Borc}  F.  Borceux, \emph{Handbook of categorical algebra}, Encyc. Math \& its Appl. 51, Cambridge Univ. Press, 1994

\bibitem[11]{BHS} R.  Brown, P. J. Higgins, R. Sivera, Non abelian algebraic topology, \emph{Eur. Math. Soc. Tracks in Math.} 15, Zurich, (2011).

\bibitem[12]{Cheng2} E.Cheng, Comparing operadic theories of $n$-category, \emph{Homology homotopy and applications} 13(2) 217-249, 2011.

\bibitem[13]{Cheng1} E.Cheng, Weak $n$-categories: opetopic and multitopic foundations, \emph{Journal of Pure and Appl. Algebra} 186(2) 109-137, 2004.

\bibitem[14]{Jard}   P. G.  Goerss, J. F. Jardine, \emph{Simplicial homotopy theory}, Birkhauser  2009.

\bibitem [15]{GPS} R.  Gordon, A. J. Power, R. Street, Coherence for tricategories, \emph{Mem. AMS} 117 no. 558 (1995).

\bibitem[16]{Gr} John W.  Gray, \emph{Formal category theory: adjointes for 2-categories}, Lecture notes in mathematics, Vol. 391, Springer 1974.

 \bibitem[17]{Hai} P.J. Haine, Lifting enhanced factorization systems to functor 2-categories, arXiv:1604.06812v1.

\bibitem[18]{Jo} A.  Joyal, Quasi-categories and kan complexes, \emph{J. Pure Appl. Algebra} 175 (2002), no 1-3, 207-222.

\bibitem [19]{Lack} S.  Lack, Codescent objects and coherence, \emph{J. Pure Applied Algebra}, 175 (2002), 223-241.

\bibitem[20]{Lu2} T.  Leinster,  Higher operads, higher categories. London Mathematical Society Lecture Note Series, 298. \emph{Cambridge University Press, Cambridge}.

\bibitem[21]{L1} T.  Leinster, A survey of definition of n-category, \emph{Theory \& Appl. of  Categories}, 10 (1), (2002), 1-70.

\bibitem[22]{L2} J.  Lurie, \emph{Higher topos theory}, Annals of Mathematics, 170, Princeton University Press, 2009.

\bibitem[23]{PP} S.  Paoli, D. Pronk, A double categorical model of weak 2-categories, \emph{Theory and Applications of Categories}, vol. \textbf{28}, no. 27, 2013, 933--980.

\bibitem[24]{Pa1} S.  Paoli, Homotopically discrete higher categorical structures, preprint 2016, arXiv:1605.05112

\bibitem[25]{Pa} S.  Paoli, Weakly globular $\rm{cat^n\mi groups}$ and Tamsamani's model, \emph{Adv. in Math.} 222 (2009) 621-727.

\bibitem[26]{Pa4} S.  Paoli, Weakly globular $n$-fold categories as a model of weak $n$-categories, in preparation.

\bibitem[27]{Pa3} S.  Paoli, Weakly globular Tamsamani $n$-categories and their rigidification, in preparation.

\bibitem [28]{PW} A.J.  Power, A general coherence result, \emph{J.Pure and Applied Algebra} \textbf{57}(1989), no. 2, pp.~165--173.

\bibitem [29]{Re1} C.  Rezk, A cartesian presentation of weak $n$-categories, \emph{Geom. Topol.} 14 (2010) no 1, 521-571.

\bibitem [30]{Re2} C.   Rezk, A model for the homotopy theory of homotopy theory, \emph{Trans. Amer. Math. Soc.} 353 (2001) no. 3, 973-1007.

\bibitem [31]{Simp} C.    Simpson, \emph{Homotopy theory of higher categories}, Cambridge University Press, Cambridge 2012.

\bibitem [32]{S2}  C.   Simpson, Homotopy types of strict 3-groupoids , preprint, 1988 math.CT/9810059.

\bibitem [33]{Str} R. Street, Two constructions of lax functors, \emph{Cahiers Topol. Geom. Differentielles} 13 (1972), 217-264.

\bibitem[34]{Ta} Z.  Tamsamani, Sur des notions de $n$-cat\'{e}gorie et $n$-groupoide non-strictes via des ensembles multi-simpliciaux, \emph{K-theory}, \textbf{16}, (1999), 51-99.

\bibitem[35]{Thomas} R. W. Thomason, Homotopy colimits in the category of small categories, \emph{Math. Proc. camb. Phil. Soc.} (1979), 85, 91.

\bibitem[36]{Ve} D.  Verity, Weak complicial sets II: Nerves of complicial Grey categories, \emph{Comtemp. Math.} 431, 2007.


\end{thebibliography}
\end{document}